\newtheorem{theorem}{Theorem}[section]
\newtheorem{proposition}[theorem]{Proposition}
\newtheorem{corollary}[theorem]{Corollary}
\newtheorem{lemma}[theorem]{Lemma}
\newtheorem{definition}[theorem]{Definition}
\newtheorem{remark}[theorem]{Remark}
\newtheorem{example}[theorem]{Example}
\theoremstyle{remark}
\newtheorem{rem}[theorem]{Remark}
\newtheorem{comprem}[theorem]{Computational Remark}
\newtheorem{notation}[theorem]{Notation}
\newtheorem*{theorem-non}{Theorem}
\newcommand{\Aut}{\operatorname{Aut}}
\newcommand{\GL}{\operatorname{GL}}
\newcommand{\Res}{\operatorname{Res}}
\newcommand{\ord}{\operatorname{ord}}
\newcommand{\tr}{\operatorname{tr}}
\newcommand{\SL}{\operatorname{SL}}
\newcommand{\Hol}{\operatorname{Hol}}
\title{Mixed Threefolds Isogenous to a Product}
\date{}
\author{Christian Gleissner }
\begin{document}
\maketitle

\begin{abstract}

\bigskip
\noindent
In this paper we study \emph{threefolds isogenous to a product of mixed type} i.e. quotients  
of a product of three compact Riemann surfaces $C_i$ of genus at least two by the action of a finite 
group $G$, which is free, but not diagonal. In particular, we are interested in the systematic 
construction and classification of these varieties. Our main result is the full classification of 
threefolds isogenous to a product of mixed type  with $\chi(\mathcal O_X)=-1$ assuming that 
any automorphism in $G$, which restricts to the trivial element in $\Aut(C_i)$ for some $C_i$, 
is the identity on the product. 
Since the \emph{holomorphic Euler-Poincar\'e-characteristic} of a smooth threefold of general type with ample canonical class is always negative, these examples lie on the boundary, in the sense of \emph{threefold geography}.  
To achieve our result we use techniques from computational group theory. Indeed, we develop a MAGMA algorithm to classify these threefolds for any given value of $\chi(\mathcal O_X)$. 
\end{abstract}

\bigskip

\section*{Introduction}

\bigskip
\bigskip
A central aspect of algebraic geometry is to study symmetries of algebraic varieties 
i.e. their automorphisms. The goal in mind is to produce new and interesting 
varieties as quotients of known and well understood ones by groups of automorphisms.
This idea was already applied very successfully by the classical geometers such as Godeaux et al. 
(see \cite{Godeaux}). 
The quotients that we are interested in were introduced by Catanese in \cite{cat}:

\noindent
A \emph{variety $X$ isogenous to a product of curves} is a quotient of a product of smooth complex projective 
curves $C_i$ of genus at least two by the free action of a finite group of automorphisms:
\[
X=\big(C_1 \times \ldots \times C_n\big)/G.
\]

\noindent
The freeness of the action implies that $X$ is a smooth projective variety of general type with ample 
canonical class $K_X$.   
In the last ten years several authors studied different aspects of the two-dimensional case extensively. 
In particular, surfaces isogenous to a product with \emph{holomorphic Euler-Poincar\'e-characteristic} 
$\chi(\mathcal O_X)= 1$ were completely classified (see \cite{bauer, polizzi, pen10} et al). 
However, a systematic treatment of the higher-dimensional case was still missing until the author, 
in collaboration with Davide Frapporti \cite{FG16}, started to study these varieties in dimension three under 
the assumption that the action is unmixed i.e. each automorphism acts diagonally:
\[
G \leq \Aut(C_1) \times \Aut(C_2) \times \Aut(C_3).
\]
Their main result is the full classification of these threefolds with $\chi(\mathcal O_X)=-1$ and a faithful 
$G$-action on each curve $C_i$. Geographically, these examples are extremal, since the holomorphic 
Euler-Poincar\'e-characteristic of a smooth projective threefold of general type with ample canonical class is negative in contrast to the surface case where it is positive (cf. \cite{miyaoka}).
In this paper we investigate the algebraically more involved \emph{mixed case}, where $G$ is acting 
non-diagonally. Our aim is to derive an algorithm, i.e. a finite procedure 
to classify mixed threefolds isogenous to a product for a fixed value of $\chi(\mathcal O_X)$.  
In particular, we want to determine the Galois groups $G$ and the 
Hodge numbers $h^{p,q}(X)$ of these varieties. Similarly to the unmixed case, 
the technical condition that the induced actions $\psi_i \colon G_i \to \Aut(C_i)$
of the groups 
\[
G_i:= G \cap  
\big[\Aut(C_1 \times \ldots \times \widehat{C_i} \times \ldots \times C_n) \times \Aut(C_i) \big]
\]
have trivial kernels $K_i$ is imposed: we say that the $G$-action is \emph{absolutely faithful}. 
This condition allows us to derive an effective bound for the order of the Galois group $G$, in terms of 
$\chi(\mathcal O_X)$, as well as numerical 
constraints on the genera of the curves $C_i$ and the branching data $T_i$ of the covers $C_i \to C_i/G_i$. 
These restrictions are very strong, indeed they allow only finitely many combinations, which are determined 
in the first step of our algorithm. Then, 
using Riemann's existence theorem, we classify all possible group actions $\psi_i \colon G_i \to \Aut(C_i)$. 
In the next step, we determine all suitable groups $G$ containing $G_i$ that allow us to define a 
free and mixed $G$-action on the product  $C_1 \times C_2 \times C_3$ via the maps $\psi_i$. 
Finally, we compute the \emph{Hodge numbers} of the quotients 
\[
X=(C_1 \times C_2 \times C_3)/G.
\]
This is achieved by analysing the induced representation of $G$ on the \emph{Dolbeault cohomology groups} 
$H^{p,q}(C_1 \times C_2 \times C_3)$.  
The classification procedure is computationally hard and cannot be carried out by hand, even 
in the boundary case $\chi(\mathcal O_X)= -1$. For this reason, the computer algebra system 
MAGMA \cite{magma} is used. 
We want to point out that the strategy of our algorithm differs slightly according to the index of the diagonal subgroup 
\[
G^0 := G \cap \big[\Aut(C_1) \times \Aut(C_2) \times \Aut(C_3)\big]
\]
in $G$. Since the quotient  $G/G^0$ embeds naturally into the permutation group $\mathfrak S_3$ of the coordinates 
of the product (cf. Proposition \ref{semiCurve}), there are three cases:
\[
G/G^0 \simeq \mathbb Z_2, \quad \quad G/G^0 \simeq \mathfrak A_3 \quad \quad 
\makebox{and} \quad \quad G/G^0 \simeq \mathfrak S_3.
\]
We call them index two, index three and index six case, respectively. 
In the index two case we can assume that the second and the third curve are isomorphic $C_2 \simeq C_3$, whereas 
all three curves $C_i$ are isomorphic if the index is three or six. \\ 
Running our implementation for the boundary value $\chi(\mathcal O_X)= -1$ we obtain the following theorems. 
\bigskip

\begin{theorem}\label{indexzwei}
\footnote{We refer to Notation \ref{not} for the definition of the groups in the table.}
Let $X$ be a threefold isogenous to a product of curves of mixed type. Assume that the action of $G$ is absolutely faithful, 
$\chi(\mathcal O_X)=-1$ and the index of $G^0$ in $G$ is two. 
Then, the tuple
\[
[G,T_1,T_2, h^{3,0}(X), h^{2,0}(X), h^{1,0}(X), h^{1,1}(X), h^{1,2}(X),d]
\]
appears in the table below. Conversely, each row in the table is realized by a family of threefolds, 
depending on $d$ parameters, which is obtained by an absolutely faithful $G$-action. 
\end{theorem}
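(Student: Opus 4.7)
The plan is a computer-assisted enumeration carrying out the algorithm sketched in the introduction, specialised to the case $[G:G^0]=2$. First I would record the structural simplification. A lift of the generator of $G/G^0\simeq\mathbb{Z}_2$ swaps $C_2$ with $C_3$, so up to isomorphism one may take $C_2=C_3$ as a single curve $C$, and the subgroups $G_i$ from the introduction collapse to $G_1=G$ and $G_2=G_3=G^0$. Under absolute faithfulness, $\psi_1$ embeds $G$ into $\Aut(C_1)$ and $\psi_2$ embeds $G^0$ into $\Aut(C)$, so the data to classify reduces to a quadruple $(G, G^0, \theta_1, \theta_2)$, with $\theta_i$ a surface-kernel epimorphism of signature $T_i$ onto $G$ respectively $G^0$, together with a choice of coset representative realising the factor swap.

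Second, I would derive the numerical restrictions. Multiplicativity of the holomorphic Euler characteristic on unramified covers, applied to the free quotient $C_1\times C\times C\to X$, yields $|G|=(g(C_1)-1)\bigl(g(C)-1\bigr)^2$; combining this with Riemann--Hurwitz for each cover $C_i\to C_i/G_i$ and with the effective bound on $|G|$ in terms of $\chi(\mathcal O_X)=-1$ coming from absolute faithfulness, I obtain explicit upper bounds on $|G|$, on the genera of $C_1$ and $C$, and on every entry of the signatures $T_1$ and $T_2$. This leaves only finitely many admissible configurations to inspect.

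Third, I would execute the enumeration in MAGMA. For each admissible order I iterate through the SmallGroups library, locate the index-two normal subgroups $G^0$ of each candidate $G$, and enumerate surface-kernel epimorphisms $\theta_1,\theta_2$ of the prescribed signatures via Riemann's existence theorem. Each pair $(\theta_1,\theta_2)$ is then glued into a mixed $G$-action on $C_1\times C\times C$ in which $G^0$ acts diagonally through $(\psi_1|_{G^0},\psi_2,\psi_2)$ and the non-trivial coset acts by twisting with the factor swap. Freeness of the resulting action is a purely group-theoretic condition on the images of $\theta_1$ and on the conjugation action of the swap lift on the image of $\theta_2$, and I would test it by a combinatorial stabiliser check. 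The Hodge numbers then follow from the K\"unneth decomposition of $H^{p,q}(C_1\times C\times C)$ as a $G$-module combined with $G$-invariant character sums, and the family dimension $d$ is read off as the dimension of the corresponding Hurwitz stratum parametrising $(C_1,\theta_1)$ together with $(C,\theta_2)$.

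The converse direction then comes for free: each surviving row of the table is by construction realised by the family built inside the algorithm. The main obstacle is not the mathematics but the scale of the search: even after the numerical reductions, the number of quadruples $(G,G^0,\theta_1,\theta_2)$ to be inspected is huge, and the feasibility of the classification depends on implementing the freeness test and the canonicalisation modulo Hurwitz equivalence and inner automorphisms of $G$ efficiently enough in MAGMA to terminate in reasonable time.
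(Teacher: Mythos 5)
Your overall strategy coincides with the paper's: reduce to $C_2\simeq C_3=C$ with $G_1=G$, $G_2=G_3=G^0$, bound $|G|$, the genera and the signatures using $\chi(\mathcal O_X)=-1$, absolute faithfulness and Riemann--Hurwitz, then enumerate groups and generating vectors (surface-kernel epimorphisms) in MAGMA, test freeness combinatorially, and extract Hodge numbers from the K\"unneth decomposition as a $G$-module and the dimension $d$ from the Hurwitz stratum. This is exactly the pipeline of Sections 2--5 of the paper, so on the level of approach there is nothing genuinely different.

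There is, however, one step in your gluing that would fail as written. You let $G^0$ act diagonally through $(\psi_1|_{G^0},\psi_2,\psi_2)$ and extend by the factor swap. If $g\in G^0$ acts as $(x,y,z)\mapsto(\psi_1(g)x,\psi_2(g)y,\psi_2(g)z)$ and a lift $\delta$ of the swap acts as $(x,y,z)\mapsto(\psi_1(\delta)x,\sigma(z),\sigma'(y))$, then the relation $\delta g=(\delta g\delta^{-1})\delta$ forces $\psi_2(\delta g\delta^{-1})=\sigma\,\psi_2(g)\,\sigma^{-1}$ for all $g\in G^0$, which is a nontrivial compatibility condition on an arbitrary epimorphism $\theta_2$ and fails in general for the non-abelian groups in the table. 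The correct normal form (Proposition \ref{normalaction} i)) has $G^0$ act on the third factor via the conjugated action $\psi_2(\delta g\delta^{-1})$, with $\delta(x,y,z)=(\psi_1(\delta)x,\,z,\,\psi_2(\delta^2)y)$; this defines a genuine homomorphism $G\to\Aut(C_1\times C\times C)$ for every choice of $\theta_2$. Relatedly, your freeness test should be made explicit for the nontrivial coset: an element $\delta g$ fixes a point iff $\psi_1(\delta g)$ has a fixed point on $C_1$ \emph{and} $(\delta g)^2$ lies in the stabilizer set of $\psi_2$ (Proposition \ref{freenesscond} a)); the condition on $(\delta g)^2$ is easy to miss if one only checks stabilizers of $\theta_1$ and $\theta_2$ separately. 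With these two corrections your argument is the paper's.
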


\bigskip

\begin{center}
{\scriptsize 
\begin{tabular}{|l|l|l|l|l|l|l|l|l|l|l|}
\hline
No. & $G$ & Id &$T_1$ & $T_2$ & $h^{3,0}$ & $h^{2,0}$ & $h^{1,0}$ &$h^{1,1}$ & $h^{1,2}$ & $d$\\ 
\hline
\hline
$1$ & $\mathbb Z_2 $ & $\langle 2, 1 \rangle$  & $[2;-]$ & $[2;-]$ & $5$ & $7$ & $4$ & $18$ & $24$ & $6$\\
\hline
$2$ & $\mathbb Z_4 $ & $\langle 4, 1 \rangle$  & $[0;2^2,4^2]$ & $[2;-]$ & $4$ & $4$ & $2$ & $11$ & $16$ & $4$\\
$3$ & $\mathbb Z_4 $ & $\langle 4, 1 \rangle$  & $[2;-]$ & $[0;2^6]$ & $7$ & $7$ & $2$ & $14$ & $22$ & $6$\\
$4$ & $\mathbb Z_4 $ & $\langle 4, 1 \rangle$  & $[2;-]$ & $[1;2^2]$ & $5$ & $6$ & $3$ & $14$ & $20$ & $5$\\
$5$ & $\mathbb Z_2^2 $ & $\langle 4, 2 \rangle$  & $[2;-]$ & $[0;2^6]$ & $5$ & $5$ & $2$ & $14$ & $20$ & $6$\\
$6$ & $\mathbb Z_2^2 $ & $\langle 4, 2 \rangle$  & $[2;-]$ & $[1;2^2]$ & $4$ & $5$ & $3$ & $14$ & $19$ & $5$\\
\hline
$7$ & $\mathfrak S_3 $ & $\langle 6, 1 \rangle$  & $[2;-]$ & $[0;3^4]$ & $4$ & $4$ & $2$ & $12$ & $17$ & $4$\\
$8$ & $\mathbb Z_6 $ & $\langle 6, 2 \rangle$  & $[2;-]$ & $[0;3^4]$ & $5$ & $5$ & $2$ & $12$ & $18$ & $4$\\
\hline
$9$ & $\mathbb Z_8$ & $\langle 8, 1 \rangle$  & $[2;-]$ & $[0;2^2,4^2]$ & $5$ & $5$ & $2$ & $12$ & $18$ & $4$\\
$10$ & $\mathbb Z_4 \times \mathbb Z_2$ & $\langle 8, 2 \rangle$  & $[2;-]$ & $[0;2^2,4^2]$ & $5$ & $5$ & $2$ & $12$ & $18$ & $4$\\
$11$ & $\mathbb Z_4 \times \mathbb Z_2$ & $\langle 8, 2 \rangle$  & $[0;2^2,4^2]$ & $[1;2^2]$ & $3$ & $2$ & $1$ & $7$ & $11$ & $3$\\
$12$ & $\mathbb Z_4 \times \mathbb Z_2$ & $\langle 8, 2 \rangle$  & $[2;-]$ & $[0;2^5]$ & $5$ & $5$ & $2$ & $12$ & $18$ & $5$\\
$13$ & $\mathbb Z_4 \times \mathbb Z_2$ & $\langle 8, 2 \rangle$  & $[2;-]$ & $[0;2^5]$ & $6$ & $6$ & $2$ & $12$ & $19$ & $5$\\
$14$ & $\mathcal D_4$ & $\langle 8, 3 \rangle$  & $[2;-]$ & $[0;2^2,4^2]$ & $4$ & $4$ & $2$ & $12$ & $17$ & $4$\\
$15$ & $\mathcal D_4$ & $\langle 8, 3 \rangle$  & $[0;2^2,4^2]$ & $[1;2^2]$ & $3$ & $2$ & $1$ & $8$ & $12$ & $3$\\
$16$ & $\mathcal D_4$ & $\langle 8, 3 \rangle$  & $[1;2]$ & $[0;2^6]$ & $3$ & $2$ & $1$ & $8$ & $12$ & $4$ \\
$17$ & $\mathcal D_4$ & $\langle 8, 3 \rangle$  & $[1;2]$ & $[1;2^2]$ & $2$ & $2$ & $2$ & $8$ & $11$ & $3$\\
$18$ & $\mathcal D_4$ & $\langle 8, 3 \rangle$  & $[2;-]$ & $[0;2^5]$ & $4$ & $4$ & $2$ & $11$ & $16$ & $5$\\
$19$ & $\mathcal D_4$ & $\langle 8, 3 \rangle$  & $[2;-]$ & $[0;2^5]$ & $5$ & $5$ & $2$ & $12$ & $18$ & $5$\\
$20$ & $Q$ & $\langle 8, 4 \rangle$  & $[2;-]$ & $[0;2^2,4^2]$ & $6$ & $6$ & $2$ & $12$ & $19$ & $4$\\
$21$ & $\mathbb Z_2^3$ & $\langle 8, 5 \rangle$  & $[2;-]$ & $[0;2^5]$ & $4$ & $4$ & $2$ & $12$ & $17$ & $5$\\
\hline
$22$ & $\mathcal D_5$ & $\langle 10, 1 \rangle$  & $[2;-]$ & $[0;5^3]$ & $4$ & $4$ & $2$ & $10$ & $15$ & $3$\\
$23$ & $\mathbb Z_{10}$ & $\langle 10, 2 \rangle$  & $[2;-]$ & $[0;5^3]$ & $4$ & $4$ & $2$ & $12$ & $17$ & $3$\\
\hline
$24$ & $Dic12$ & $\langle 12, 1 \rangle$  & $[2;-]$ & $[0;2^2,3^2]$ & $6$ & $6$ & $2$ & $12$ & $19$ & $4$\\
$25$ & $Dic12$ & $\langle 12, 1 \rangle$  & $[2;-]$ & $[0;3,6^2]$ & $5$ & $5$ & $2$ & $10$ & $16$ & $3$\\
$26$ & $\mathbb Z_{12}$ & $\langle 12, 2 \rangle$  & $[2;-]$ & $[0;2^2,3^2]$ & $5$ & $5$ & $2$ & $12$ & $18$ & $4$\\
$27$ & $\mathbb Z_{12}$ & $\langle 12, 2 \rangle$  & $[2;-]$ & $[0;3,6^2]$ & $4$ & $4$ & $2$ & $12$ & $17$ & $3$\\
$28$ & $\mathcal D_6$ & $\langle 12, 4 \rangle$  & $[2;-]$ & $[0;2^2,3^2]$ & $4$ & $4$ & $2$ & $12$ & $17$ & $4$\\
$29$ & $\mathcal D_6$ & $\langle 12, 4 \rangle$  & $[2;-]$ & $[0;3,6^2]$ & $4$ & $4$ & $2$ & $10$ & $15$ & $3$\\
$30$ & $\mathcal D_6$ & $\langle 12, 4 \rangle$  & $[2;-]$ & $[0;2^2,3^2]$ & $4$ & $4$ & $2$ & $11$ & $16$ & $4$\\
$31$ & $\mathbb Z_3 \times \mathbb Z_2^2$ & $\langle 12, 5 \rangle$  & $[2;-]$ & $[0;2^2,3^2]$ & $5$ & $5$ & $2$ & $12$ & $18$ & $4$\\
$32$ & $\mathbb Z_3 \times \mathbb Z_2^2$ & $\langle 12, 5 \rangle$  & $[2;-]$ & $[0;3,6^2]$ & $4$ & $4$ & $2$ & $12$ & $17$ & $3$\\
\hline
$33$ & $\mathbb Z_{16}$ & $\langle 16, 1 \rangle$  & $[2;-]$ & $[0;2,8^2]$ & $4$ & $4$ & $2$ & $12$ & $17$ & $3$\\
$34$ & $\mathbb Z_2^2 \rtimes_\varphi \mathbb Z_4$ & $\langle 16, 3 \rangle$  & $[1;2]$ & $[0;2^2,4^2]$ & $3$ & $2$ & $1$ 
		 & $6$ & $10$ & $2$\\
$35$ & $\mathbb Z_2^2 \rtimes_\varphi \mathbb Z_4$ & $\langle 16, 3 \rangle$  & $[0;2^2,4^2]$ & $[0;2^5]$ & $4$ & $2$ & $0$ 
     & $6$ & $11$ & $3$\\
$36$ & $\mathbb Z_2^2 \rtimes_\varphi \mathbb Z_4$ & $\langle 16, 3 \rangle$  & $[0;2^2,4^2]$ & $[0;2^5]$ & $3$ & $1$ & $0$ 
     & $5$ & $9$ & $3$\\
$37$ & $\mathbb Z_2^2 \rtimes_\varphi \mathbb Z_4$ & $\langle 16, 3 \rangle$  & $[0;2^2,4^2]$ & $[0;2^5]$ & $4$ & $2$ 
     & $0$ & $5$ & $10$ & $3$\\
$38$ & $\mathbb Z_2^2 \rtimes_\varphi \mathbb Z_4$ & $\langle 16, 3 \rangle$  & $[0;2^2,4^2]$ & $[0;2^5]$ & $4$ & $2$ & $0$ 
     & $7$ & $12$ & $3$\\
$39$ & $\mathbb Z_2^2 \rtimes_\varphi \mathbb Z_4$ & $\langle 16, 3 \rangle$  & $[1;2]$ & $[0;2^5]$ & $3$ & $2$ & $1$ 
     & $6$ & $10$ & $3$\\
$40$ & $\mathbb Z_2^2 \rtimes_\varphi \mathbb Z_4$ & $\langle 16, 3 \rangle$  & $[1;2]$ & $[0;2^5]$ & $4$ & $3$ & $1$ 
     & $6$ & $11$ & $3$\\
$41$ & $\mathbb Z_4 \rtimes_\varphi \mathbb Z_4$ & $\langle 16, 4 \rangle$  & $[1;2]$ & $[0;2^2,4^2]$ & $3$ & $2$ & $1$ 
     & $6$ & $10$ & $2$\\
$42$ & $\mathbb Z_4 \rtimes_\varphi \mathbb Z_4$ & $\langle 16, 4 \rangle$  & $[1;2]$ & $[0;2^2,4^2]$ & $4$ & $3$ & $1$ 
     & $6$ & $11$ & $2$\\
$43$ & $\mathbb Z_8 \times \mathbb Z_2 $ & $\langle 16, 5 \rangle$  & $[2;-]$ & $[0;2,8^2]$ & $4$ & $4$ & $2$ & $12$ & $17$ & $3$\\
$44$ & $M_{16}$ & $\langle 16, 6 \rangle$  & $[2;-]$ & $[0;2,8^2]$ & $5$ & $5$ & $2$ & $10$ & $16$ & $3$\\
\hline
\end{tabular}
}
\end{center}

\begin{center}
{\scriptsize 
\begin{tabular}{|l|l|l|l|l|l|l|l|l|l|l|}
\hline
No. & $G$ & Id &$T_1$ & $T_2$ & $h^{3,0}$ & $h^{2,0}$ & $h^{1,0}$ &$h^{1,1}$ & $h^{1,2}$ & $d$\\ 
\hline
\hline
$45$ & $\mathcal D_8$ & $\langle 16, 7 \rangle$  & $[2;-]$ & $[0;2,8^2]$ & $4$ & $4$ & $2$ & $10$ & $15$ & $3$\\
$46$ & $\mathcal D_8$ & $\langle 16, 7 \rangle$  & $[2;-]$ & $[0;2^3,4]$ & $4$ & $4$ & $2$ & $11$ & $16$ & $4$\\
$47$ & $SD16$ & $\langle 16, 8 \rangle$  & $[2;-]$ & $[0;2,8^2]$ & $4$ & $4$ & $2$ & $12$ & $17$ & $3$\\
$48$ & $SD16$ & $\langle 16, 8 \rangle$  & $[2;-]$ & $[0;4^3]$ & $4$ & $4$ & $2$ & $11$ & $16$ & $3$\\
$49$ & $SD16$ & $\langle 16, 8 \rangle$  & $[0;2,4,8]$ & $[1;2^2]$ & $3$ & $2$ & $1$ & $7$ & $11$ & $2$\\
$50$ & $SD16$ & $\langle 16, 8 \rangle$  & $[2;-]$ & $[0;2^3,4]$ & $5$ & $5$ & $2$ & $11$ & $17$ & $4$\\
$51$ & $Dic16$ & $\langle 16, 9 \rangle$  & $[2;-]$ & $[0;2,8^2]$ & $6$ & $6$ & $2$ & $10$ & $17$ & $3$\\
$52$ & $Dic16$ & $\langle 16, 9 \rangle$  & $[2;-]$ & $[0;4^3]$ & $5$ & $5$ & $2$ & $11$ & $17$ & $3$\\
$53$ & $\mathbb Z_2^2 \times \mathbb Z_4$ & $\langle 16, 10 \rangle$  & $[0;2^2,4^2]$ & $[0;2^5]$ & $3$ & $1$ & $0$ 
     & $5$ & $9$ & $3$\\
$54$ & $\mathcal D_4 \times \mathbb Z_2$ & $\langle 16, 11 \rangle$  & $[2;-]$ & $[0;2^3,4]$ & $4$ & $4$ & $2$ 
     & $11$ & $16$ & $4$\\
$55$ & $\mathcal D_4 \times \mathbb Z_2$ & $\langle 16, 11 \rangle$  & $[0;2^2,4^2]$ & $[0;2^5]$ & $3$ & $1$ 
     & $0$ & $6$ & $10$ & $3$\\
$56$ & $Q \times \mathbb Z_2$ & $\langle 16, 12 \rangle$  & $[2;-]$ & $[0;4^3]$ & $5$ & $5$ & $2$ & $11$ & $17$  & $3$ \\
$57$ & $\mathcal D_4 \ast_{\phi} \mathbb Z_4$ & $\langle 16, 13 \rangle$  & $[2;-]$ & $[0;4^3]$ & $4$ & $4$ 
     & $2$ & $11$ & $16$ & $3$\\
$58$ & $\mathcal D_4 \ast_{\phi} \mathbb Z_4$ & $\langle 16, 13 \rangle$  & $[2;-]$ & $[0;2^3,4]$ & $5$ & $5$ 
     & $2$ & $11$ & $17$ & $4$\\
\hline
$59$ & $Dic20$ & $\langle 20, 1 \rangle$  & $[2;-]$ & $[0;2,5,10]$ & $6$ & $6$ & $2$ & $10$ & $17$ & $3$\\
$60$ & $\mathbb Z_{20}$ & $\langle 20, 2 \rangle$  & $[2;-]$ & $[0;2,5,10]$ & $4$ & $4$ & $2$ & $12$ & $17$ & $3$\\
$61$ & $\mathcal D_{10}$ & $\langle 20, 4 \rangle$  & $[2;-]$ & $[0;2,5,10]$ & $4$ & $4$ & $2$ & $10$ & $15$ & $3$\\
$62$ & $\mathbb Z_2^2 \times \mathbb Z_5$ & $\langle 20, 5 \rangle$  & $[2;-]$ & $[0;2,5,10]$ & $4$ & $4$ & $2$ 
     & $12$ & $17$ & $3$\\
\hline
$63$ & $Dic24$ & $\langle 24, 4 \rangle$  & $[2;-]$ & $[0;3,4^2]$ & $5$ & $5$ & $2$ & $11$ & $17$ & $3$\\
$64$ & $\mathfrak S_3 \times \mathbb Z_4$ & $\langle 24, 5 \rangle$  & $[2;-]$ & $[0;3,4^2]$ & $4$ & $4$ & $2$ & $11$ & $16$ & $3$\\
$65$ & $\mathfrak S_3 \times \mathbb Z_4$ & $\langle 24, 5 \rangle$  & $[2;-]$ & $[0;2^3,3]$ & $5$ & $5$ & $2$ & $11$ & $17$ & $4$\\
$66$ & $\mathcal D_{12}$ & $\langle 24, 6 \rangle$  & $[2;-]$ & $[0;2^3,3]$ & $4$ & $4$ & $2$ & $11$ & $16$ & $4$\\
\hline
$67$ & $Dic12 \times \mathbb Z_2$ & $\langle 24, 7 \rangle$  & $[2;-]$ & $[0;2,6^2]$ & $5$ & $5$ & $2$ & $10$ & $16$ & $3$\\
$68$ & $Dic12 \times \mathbb Z_2$ & $\langle 24, 7 \rangle$  & $[2;-]$ & $[0;2,6^2]$ & $6$ & $6$ & $2$ & $10$ & $17$ & $3$\\
$69$ & $Dic12 \times \mathbb Z_2$ & $\langle 24, 7 \rangle$  & $[2;-]$ & $[0;3,4^2]$ & $5$ & $5$ & $2$ & $11$ & $17$ & $3$\\
$70$ & $\mathbb Z_3 \rtimes_\varphi \mathcal D_4$ & $\langle 24, 8 \rangle$  & $[2;-]$ & $[0;2^3,3]$ & $5$ & $5$ & $2$ 
     & $11$ & $17$ & $4$\\
$71$ & $\mathbb Z_3 \rtimes_\varphi \mathcal D_4$ & $\langle 24, 8 \rangle$  & $[2;-]$ & $[0;3,4^2]$ & $4$ & $4$ & $2$ 
     & $11$ & $16$ & $3$\\
$72$ & $\mathbb Z_3 \rtimes_\varphi \mathcal D_4$ & $\langle 24, 8 \rangle$  & $[2;-]$ & $[0;2,6^2]$ & $4$ & $4$ & $2$ 
     & $10$ & $15$ & $3$\\
$73$ & $\mathbb Z_3 \rtimes_\varphi \mathcal D_4$ & $\langle 24, 8 \rangle$  & $[2;-]$ & $[0;2,6^2]$ & $4$ & $4$ & $2$ 
     & $12$ & $17$ & $3$\\
$74$ & $\mathbb Z_6 \times \mathbb Z_4$ & $\langle 24, 9 \rangle$  & $[2;-]$ & $[0;2,6^2]$ & $4$ & $4$ & $2$ & $12$ & $17$ & $3$\\
$75$ & $\mathcal D_4 \times \mathbb Z_3 $ & $\langle 24, 10 \rangle$  & $[2;-]$ & $[0;2,6^2]$ & $4$ & $4$ & $2$ & $11$ & $16$ & $3$\\
$76$ & $\mathcal D_4 \times \mathbb Z_3 $ & $\langle 24, 10 \rangle$  & $[2;-]$ & $[0;2,6^2]$ & $5$ & $5$ & $2$ & $10$ & $16$ & $3$\\
$77$ & $\mathcal D_{6} \times \mathbb Z_2$ & $\langle 24, 14 \rangle$  & $[2;-]$ & $[0;2^3,3]$ & $4$ & $4$ & $2$ & $11$ & $16$ & $4$\\
$78$ & $\mathcal D_{6} \times \mathbb Z_2 $ & $\langle 24, 14 \rangle$  & $[2;-]$ & $[0;2,6^2]$ & $4$ & $4$ & $2$ 
     & $10$ & $15$ & $3$\\
$79$ & $\mathbb Z_2^3 \times \mathbb Z_3 $ & $\langle 24, 15 \rangle$  & $[2;-]$ & $[0;2,6^2]$ & $4$ & $4$ & $2$ & $12$ & $17$ & $3$\\
\hline
$80$ & $\mathbb Z_2^2 \rtimes_\varphi \mathbb Z_8 $ & $\langle 32, 5 \rangle$  & $[1;2]$ & $[0;2,8^2]$ & $2$ & $1$ & $1$ 
     & $6$ & $9$ & $1$\\
$81$ & $\mathbb Z_2^3 \rtimes_\varphi \mathbb Z_4$ & $\langle 32, 6 \rangle$  & $[0;2^2,4^2]$ & $[0;2^3,4]$ & $3$ & $1$ & $0$ 
		 & $4$ & $8$ & $2$\\
$82$ & $\mathbb Z_2^3 \rtimes_\varphi \mathbb Z_4$ & $\langle 32, 6 \rangle$  & $[1;2]$ & $[0;2^3,4]$ & $3$ & $2$ & $1$ 
     & $6$ & $10$ & $2$\\
$83$ & $M16 \rtimes_\varphi \mathbb Z_2$ & $\langle 32, 7 \rangle$  & $[1;2]$ & $[0;2^3,4]$ & $3$ & $2$ & $1$ & $6$ & $10$ & $2$\\
$84$ & $\mathcal D_4 \rtimes_\varphi \mathbb Z_4$ & $\langle 32, 9 \rangle$  &  $[0;2^2,4^2]$ & $[0;2^3,4]$ & $3$ & $1$ & $0$ 
		 & $5$ & $9$ & $2$\\
$85$ & $\mathcal D_4 \rtimes_\varphi \mathbb Z_4$ & $\langle 32, 9 \rangle$  &  $[0;2^2,4^2]$ & $[0;2^3,4]$ & $3$ & $1$ 
		 & $0$ & $4$ & $8$ & $2$\\
$86$ & $\mathcal D_4 \rtimes_\varphi \mathbb Z_4 $ & $\langle 32, 9 \rangle$  & $[0;2,4,8]$ & $[0;2^5]$ & $3$ & $1$ 
     & $0$ & $5$ & $9$ & $2$\\
$87$ & $\mathbb Z_4 \rtimes_\varphi \mathbb Z_8$ & $\langle 32, 12 \rangle$  & $[1;2]$ & $[0;2,8^2]$ & $3$ & $2$ & $1$ 
     & $6$ & $10$ & $1$\\
$88$ & $\mathcal D_4 \times \mathbb Z_4$ & $\langle 32, 25 \rangle$  &  $[0;2^2,4^2]$ & $[0;2^3,4]$ & $3$ & $1$ & $0$ 
     & $4$ & $8$ & $2$\\
$89$ & $\mathbb Z_4 \rtimes_\varphi \mathcal D_4$ & $\langle 32, 28 \rangle$  &  $[0;2^2,4^2]$ & $[0;2^3,4]$ & $3$ & 
$1$ & $0$ & $5$ & $9$ & $2$\\		
$90$ & $SD16 \times \mathbb Z_2 $ & $\langle 32, 40 \rangle$  & $[2;-]$ & $[0;2,4,8]$ & $4$ & $4$ & $2$ & $11$ & $16$ & $3$\\
$91$ & $\mathcal D_8 \ast_{\phi} \mathbb Z_4$ & $\langle 32, 42 \rangle$  & $[2;-]$ & $[0;2,4,8]$ & $4$ & $4$ 
     & $2$ & $11$ & $16$ & $3$\\
$92$ & $\Hol(\mathbb Z_8)$ & $\langle 32, 43 \rangle$  & $[2;-]$ & $[0;2,4,8]$ & $4$ & $4$ & $2$ & $10$ & $15$ & $3$\\
$93$ & $SD16 \rtimes_\varphi \mathbb Z_2 $ & $\langle 32, 44 \rangle$  & $[2;-]$ & $[0;2,4,8]$ & $5$ & $5$ & $2$ 
     & $10$ & $16$ & $3$\\
\hline
$94$ & $2O$ & $\langle 48, 28 \rangle$  &  $[2;-]$ & $[0;3^2,4]$ & $5$ & $5$ & $2$ & $11$ & $17$ & $3$\\
$95$ & $\GL(2,\mathbb F_3)$ & $\langle 48, 29 \rangle$  &  $[2;-]$ & $[0;3^2,4]$ & $4$ & $4$ & $2$ & $11$ & $16$ & $3$ \\
\hline
$96$ & $\SL(2,3) \times \mathbb Z_2$ & $\langle 48, 32 \rangle$  &  $[2;-]$ & $[0;3^2,4]$ & $5$ & $5$ & $2$ & $11$ & $17$ & $3$\\
$97$ & $\SL(2,3) \rtimes_\varphi \mathbb Z_2$ & $\langle 48, 33 \rangle$  &  $[2;-]$ & $[0;3^2,4]$ & $4$ & $4$ & $2$ 
     & $11$ & $16$ & $3$\\
$98$ & $Dic24 \rtimes_\varphi \mathbb Z_2 $ & $\langle 48, 37 \rangle$  &  $[2;-]$ & $[0;2,4,6]$ & $4$ & $4$ 
     & $2$ & $11$ & $16$ & $3$\\
$99$ & $\mathcal D_4 \times \mathfrak S_3 $ & $\langle 48, 38 \rangle$  &  $[2;-]$ & $[0;2,4,6]$ & $4$ & $4$ 
     & $2$ & $10$ & $15$ & $3$\\
$100$& $\mathcal D_4 \rtimes_\varphi \mathfrak S_3 $ & $\langle 48, 39 \rangle$  &  $[2;-]$ & $[0;2,4,6]$ & $5$ & $5$ & $2$ 
		 & $10$ & $16$ & $3$\\
$101$& $\mathbb Z_6 \rtimes_\varphi \mathcal D_4$ & $\langle 48, 43 \rangle$  &  $[2;-]$ & $[0;2,4,6]$ & $4$ & $4$ & $2$ 
		 & $11$ & $16$ & $3$\\
\hline
\end{tabular}
}
\end{center}

\begin{center}
{\scriptsize 
\begin{tabular}{|l|l|l|l|l|l|l|l|l|l|l|}
\hline
No. & $G$ & Id &$T_1$ & $T_2$ & $h^{3,0}$ & $h^{2,0}$ & $h^{1,0}$ &$h^{1,1}$ & $h^{1,2}$ & $d$\\ 
\hline
\hline
$102$& $\mathfrak S_4 \times \mathbb Z_4 $ & $\langle 96, 186 \rangle$  & $[0;2^2,4^2]$ & $[0; 2, 4, 6 ]$ & $3$ & $1$ & $0$ 
		 & $3$ & $7$ & $1$\\
$103$& $\GL(2,\mathbb F_3)  \times \mathbb Z_2 $ & $\langle 96, 189 \rangle$  & $[2;-]$ & $[0; 2, 3, 8 ]$ & $4$ & $4$ 
		 & $2$ & $11$ & $16$ & $3$\\
$104$& $(Q \times \mathbb Z_2) \rtimes_\varphi \mathfrak S_3  $ & $\langle 96, 190 \rangle$  & $[2;-]$ & $[0; 2, 3, 8 ]$ 
     & $5$ & $5$ & $2$ & $10$ & $16$ & $3$\\
$105$& $2O \rtimes_\varphi \mathbb Z_2  $ & $\langle 96, 192 \rangle$  & $[2;-]$ & $[0; 2, 3, 8 ]$ & $4$ & $4$ & $2$ 
     & $11$ & $16$ & $3$\\
$106$& $\GL(2,\mathbb F_3)  \rtimes_\varphi \mathbb Z_2 $ & $\langle 96,193 \rangle$  & $[2;-]$ & $[0;2,3,8]$ & $4$ & $4$ 
     & $2$ & $10$ & $15$ & $3$\\
$107$& $\GL(2,\mathbb Z_4)$ & $\langle 96, 195 \rangle$  & $[0;2^2,4^2]$ & $[0; 2, 4, 6 ]$ & $3$ & $1$ & $0$ & $4$ & $8$ & $1$\\
\hline
\end{tabular}
}
\end{center}

\bigskip
\bigskip

\noindent
{\bf Explanation.} The table above is organized in the following way:  
\begin{itemize}
\item 
The first column enumerates the examples. 
\item
The second column reports the Galois group (see Notation \ref{not} for the definition of the groups).
\item
The third column provides the MAGMA identifier of the Galois group: $\langle a,b \rangle$ denotes the $b^{th}$ 
group of order $a$ in the \emph{Database of Small Groups} (see \cite{magma}).
\item
The types $T_i=[g_i';m_{i,1}, \ldots ,m_{i,r_i}]$ in column 4 and 5 report the branching data 
of the covers $C_i \to C_i/G_i$.  Here $g_i'$ is the genus of $C_i/G_i$ and the $m_{i,j}$'s are 
the branching orders. 
The types are written in a simplified way: e.g.  $[0;2,2,4,4]$ is abbreviated by $[0;2^2,4^2]$.  
\item
The remaining columns report the Hodge numbers $h^{p,q}(X)$ and the number $d$ of parameters of the families 
(see Remark \ref{moduli}).  
\end{itemize}

\bigskip

\begin{theorem}\label{indexdreiundsechs}
\makebox{$$}
\begin{itemize}
\item[a)]
In the index three case there exists a unique group $G$ yielding a one-dimensional family of threefolds $X$ isogenous 
to a product with $\chi(\mathcal O_X)=-1$. 
The group $G \simeq  \mathbb Z_9 \rtimes_\varphi  \mathbb Z_3$ has MAGMA id $\langle 27,4 \rangle$ and 
the Hodge numbers are: 
\[
h^{3,0}(X)=4, \quad h^{2,0}(X)=2, \quad h^{1,0}(X)=0, \quad  h^{1,1}(X)=5 \quad \makebox{and} \quad  h^{1,2}(X)=10.
\]
\item[b)]
There is no group acting absolutely faithful and freely
on a product of curves such that the quotient $X$ has $\chi(\mathcal O_X)=-1$ and the index of 
$G^0$ in $G$ is six.
\end{itemize}
\end{theorem}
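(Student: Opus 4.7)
The plan is to specialize the general algorithm outlined in the introduction to the two remaining index cases and run it, exactly as was done for Theorem \ref{indexzwei}. Because $G/G^0$ acts on the three factors by permutations via a transitive subgroup of $\mathfrak S_3$, all three curves are mutually isomorphic, so the data to classify reduce to a single curve $C$, a single group $H:=G_1=G_2=G_3$ and a single branching type $T:=T_1=T_2=T_3$, together with an extension
\[
1 \longrightarrow G^0 \longrightarrow G \longrightarrow Q \longrightarrow 1, \qquad Q \in \{\mathbb Z_3, \mathfrak S_3\}.
\]
First I would translate the bound on $|G|$ coming from absolute faithfulness and $\chi(\mathcal O_X)=-1$ (established for the index two case) into bounds for $|H|$, the genus $g(C)$ and the multiplicities in $T$; since $G^0$ embeds diagonally into $H\times H\times H$ and $|G|=[G:G^0]\cdot |G^0|$, the factor $[G:G^0]\in\{3,6\}$ makes these constraints more restrictive than in the index two case, producing a short finite list of admissible pairs $(H,T)$.

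For each admissible pair, I would use Riemann's existence theorem exactly as in the index two classification to enumerate, up to the action of $\Aut(H)$ and the mapping class group of the orbifold $C/H$, all generating vectors realizing $\psi\colon H\to\Aut(C)$ of type $T$. Then, for every such $\psi$, I would loop in MAGMA through the groups $G$ of order $3|G^0|$ or $6|G^0|$ containing $G^0$ as a normal subgroup with the prescribed quotient $Q$ and acting by coordinate permutation on the diagonal embedding $G^0\hookrightarrow H^3$; for each candidate $G$ one has to verify (i) that the induced action on $C\times C\times C$ is free, using the criterion from Proposition~\ref{semiCurve} and its consequences for stabilizers under permutation of factors, and (ii) that the action is absolutely faithful, i.e.\ $K_i=1$ for every~$i$. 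The main obstacle is step (i): in the mixed setting, freeness of a permutation-type automorphism $g\cdot\sigma$ amounts to the absence of common fixed points of certain diagonal compositions $g_{\sigma(i)}\cdots g_i$, and this has to be checked combinatorially from the generating vectors rather than read off from stabilizer orders as in the purely diagonal case.

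Once the surviving tuples have been isolated, the Hodge numbers are computed from the decomposition of $H^{p,q}(C\times C\times C)$ under the $G$-action, using the K\"unneth formula together with the Chevalley--Weil formula to read off the $H$-character on $H^{1,0}(C)$ from the branching data $T$; this produces the $h^{p,q}(X)$ listed in part~(a). For part~(b) the claim is negative, and I expect the algorithm to return an empty list: the combined constraints from $\chi(\mathcal O_X)=-1$, index six, and the requirement that an $\mathfrak S_3$-extension of $G^0$ act freely and absolutely faithfully will force incompatible conditions, either violating the genus bound for $C$ or forcing a non-trivial kernel $K_i$. Establishing these non-existence statements rigorously via a computer enumeration, rather than by a conceptual argument, is the cleanest route, and the MAGMA implementation used for Theorem~\ref{indexzwei} applies with only the obvious modifications to the permutation action of $Q$.
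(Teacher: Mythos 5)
Your proposal follows essentially the same route as the paper: it specializes the combinatorial bounds and the normal-form/freeness machinery to the index three and index six cases, enumerates the finitely many admissible numerical data and generating vectors via Riemann's existence theorem, checks freeness of the extension $1 \to G^0 \to G \to Q \to 1$ by computer, and computes the Hodge numbers via K\"unneth and Chevalley--Weil, with part (b) established by the enumeration returning an empty list. This matches the paper's proof, which consists of running the MAGMA implementation of exactly this algorithm for $\chi(\mathcal O_X)=-1$.
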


\bigskip

\noindent 
The above theorems complete the classification of threefolds isogenous to a product with absolutely faithful 
$G$-action and $\chi(\mathcal O_X) =-1$. 
For the full list of examples in the unmixed case we refer to  \cite[Theorem 0.1]{FG16}.
It is remarkable that there are no rigid examples in our classification neither in the mixed,
nor in the unmixed case. In contrast, there are six examples of rigid surfaces $S$ isogenous to a product with 
$\chi(\mathcal O_S)=1$ (see \cite{bauer}); one of them was originally discovered by Beauville (see \cite{Be83}). 
To obtain rigid, three-dimensional examples with $\chi(\mathcal O_X)=-1$, we need to allow non-trivial 
kernels $K_i$. Indeed, modifying Beauvilles construction, we are able to give an example of such a threefold (see Example \ref{RigidExample}). It would be interesting to find more, or even try to classify them completely. 
In addition, we also provide an example of a threefold $X$ isogenous to a product, where the index of 
$G^0$ in $G$ is six, the $G$-action is not absolutely faithful and $\chi(\mathcal O_X)=-1$ 
(see Example \ref{Index6Examples}).

\bigskip
\noindent 
The paper is organized in the following way:
in Section \ref{generalities} we introduce varieties isogenous to a product and explain 
their basic properties. Section \ref{GactProdCurves} 
is dedicated to the structure of mixed group actions on a product of
three curves. In Section \ref{group_descr} we define the algebraic datum of a mixed threefold $X$ isogenous 
to a product. Based on that, we show in Section \ref{hodgenumbers} how to determine the Hodge numbers of a threefold
$X$ isogenous to a product from an algebraic datum of $X$. 
In Section \ref{bounds_smooth} we develop an algorithm to classify threefolds isogenous 
to a product of mixed type for a fixed value of $\chi(\mathcal O_X)$, that are obtained by an absolutely faithful group action, and present our main result: the classification of these varieties in the case $\chi(\mathcal O_X)=-1$. 

\begin{notation}\label{not}
Throughout the paper all varieties are defined over the field of complex numbers
and the standard notation from complex algebraic geometry is used, see for example \cite{GriffH}. 
Moreover, we have the following notations and definitions from group theory:

\begin{itemize}
\item
The cyclic group of order $n$ is denoted by 
$\mathbb Z_n$, the dihedral group of order $2n$ by $\mathcal D_n$ and the symmetric and alternating group on $n$ letters by $\mathfrak S_n$ and $\mathfrak A_n$, respectively.
\item 
The quaternion group of order $8$ is defined as $Q:= \langle -1,i,j,k  ~ \big\vert ~i^2=j^2=k^2=ijk =-1 \rangle$. 
\item
The groups $\GL(n,\mathbb F_q)$ and $\SL(n,\mathbb F_q)$ are the general linear and special linear groups 
of $n \times n$ matrices over the field $\mathbb F_q$.
\item
The holomorph $\Hol(G)$ of a group $G$ is the semi-direct product $G \rtimes_{id} \Aut(G)$.
\item
Let $G_1$ and $G_2$ be groups with isomorphic subgroups  $U_i \leq Z(G_i)$ and let $\phi \colon U_1 \to U_2$ be an isomorphism. 
The central product $G_1 \ast_{\phi} G_2$ is defined as the quotient of the direct product  
$G_1 \times G_2$ by the normal subgroup 
$\lbrace (g_1,g_2) \in U_1 \times U_2 ~ \big\vert ~ \phi(g_1) g_2 = 1_{G_2} \rbrace$.
\item
The dicyclic group of order $4n$ is $Dic4n:=\langle a,b,c ~ \big\vert ~ a^n=b^2=c^2=abc \rangle$.
\item
The  semidihedral group of order $2^n$ is 
$SD2^n:=\langle a,b ~ \big\vert ~ a^{2^{(n-1)}}=b^2=1, ~ bab = a^{2^{(n-2)}-1} \rangle$.
\item
The group $M_{16}$ of order $16$ is 
$M_{16}:=\langle a,b ~ \big\vert ~ a^8=b^2=e, ~ bab^{-1}=a^5 \rangle$.
\item
The binary octahedral group of order $48$ is 
$2O:=\langle a,b,c ~ \big\vert ~ a^4=b^3=c^2=abc \rangle$.
\end{itemize}
\end{notation}

\bigskip

\noindent 
{\bf Acknowledgements.} The author thanks Ingrid Bauer, Fabrizio Catanese, Davide Frapporti, Roberto Pignatelli and Sascha Weigl for several suggestions and useful mathematical discussions.

\newpage

\section{Generalities}\label{generalities}

\bigskip
\noindent
In this section we introduce the objects that we study and state some of their basic properties.

\bigskip
\begin{definition}
A complex algebraic variety $X$ \emph{isogenous to a product of curves} is a quotient 
\[
(C_1 \times \ldots \times C_n)/G
\]
of a product of compact Riemann surfaces $C_i$ of genus at least two by a finite group acting freely on the product. 
\end{definition}

\noindent 
The freeness of the $G$-action implies that $X$ is a projective manifold 
of general type with ample canonical class $K_X$. It also allows us to derive formulas for the Chern invariants 
$\chi(\mathcal O_X)$, $e(X)$ and $K_X^n$ in terms of the group order and the genera of the Riemann surfaces 
(see \cite[Proposition 1.2]{FG16}):    
\[
\chi(\mathcal O_X)= \frac{(-1)^n}{|G|} \prod_{i=1}^n \big( g(C_i)-1 \big), \quad 
\quad e(X)= 2^n  \chi(\mathcal O_X)
\quad \makebox{and} \quad K_X^n= (-1)^n n! ~ 2^n  \chi(\mathcal O_X).
\]
To study group actions on a product of compact Riemann surfaces $C_i$ of genus at least two, it is 
important to understand the structure of the automorphism group of the product. 
This group has a simple description in terms of the automorphism groups $\Aut(C_i)$ 
of the factors:

\begin{proposition}[{\cite[Corollary 3.9]{cat}}]\label{semiCurve}   
Let $D_1,\ldots,D_k$ be pairwise non isomorphic compact Riemann surfaces of genus at least two, then   
\[
\Aut(D_1^{n_1} \times \ldots \times D_k^{n_k})= 
\big(\Aut(D_1)^{n_1} \rtimes \mathfrak S_{n_1}\big) \times \ldots \times 
\big(\Aut(D_k)^{n_k} \rtimes \mathfrak S_{n_k}\big) 
\]
for all positive integers $n_i$. 
\end{proposition}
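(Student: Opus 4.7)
The plan is to reduce the statement to the following rigidity principle: any holomorphic map $f$ from a product of compact Riemann surfaces of genus at least two to a single compact Riemann surface $D$ of genus at least two must factor through one of the projections. Granting this, take an automorphism $\varphi$ of the full product $C_1 \times \ldots \times C_n$ (where $n = n_1 + \ldots + n_k$ and the $C_j$ enumerate the factors $D_i$ with the appropriate multiplicities). For each coordinate $j$ the composition $\pi_j \circ \varphi$ is a holomorphic map to a curve of genus $\geq 2$, so by rigidity it factors as $\varphi_j \circ \pi_{\sigma(j)}$ for a uniquely determined index $\sigma(j)$ and a morphism $\varphi_j \colon C_{\sigma(j)} \to C_j$. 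The invertibility of $\varphi$ forces $\sigma$ to be a bijection in $\mathfrak S_n$ and every $\varphi_j$ to be an isomorphism. Because the $D_i$ are pairwise non-isomorphic, $\sigma$ must preserve the partition of $\{1,\ldots,n\}$ into the blocks corresponding to the factors $D_i^{n_i}$, which yields the direct product decomposition on the right hand side. Within each block $D_i^{n_i}$ the automorphisms are the obvious ones, namely $\Aut(D_i)^{n_i} \rtimes \mathfrak S_{n_i}$, acting coordinate-wise together with a permutation of the copies.

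It remains to establish the rigidity principle, which I would prove by slicing. Fix a coordinate $j$ and a point $c \in \prod_{\ell \neq j} C_\ell$ in the remaining coordinates; the resulting slice $f_j^c \colon C_j \to D$, $x \mapsto f(c_1, \ldots, x, \ldots, c_n)$, is a holomorphic map between compact Riemann surfaces of genus at least two. If $f_j^c$ is non-constant, then the pullback $(f_j^c)^\ast T_D$ has negative degree, since $\deg T_D = 2 - 2g(D) < 0$, hence $H^0\bigl(C_j, (f_j^c)^\ast T_D\bigr) = 0$ and $f_j^c$ admits no non-trivial infinitesimal deformations. Consequently, on the Zariski open subset of $\prod_{\ell \neq j} C_\ell$ where the slice is non-constant, the assignment $c \mapsto f_j^c$ is locally constant, and by connectedness it is constant. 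Therefore $f$ depends only on the coordinate $j$ and factors through $\pi_j$. If $f$ itself is non-constant, at least one slice in at least one direction must be non-constant, so this factorization is realized.

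The main obstacle is this rigidity step: one has to argue carefully that the family of slice maps is genuinely constant in the transverse directions, which is where the genus-at-least-two hypothesis enters decisively via the vanishing $H^0\bigl(C_j, (f_j^c)^\ast T_D\bigr) = 0$ and via the discreteness of the space of holomorphic maps between curves of general type (de Franchis). Once the rigidity principle is established, the passage to the statement of the proposition is essentially bookkeeping: tracking invertible coordinate substitutions on products and using the pairwise non-isomorphism of the $D_i$ to confine the permutation $\sigma$ to each block.
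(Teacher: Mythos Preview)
The paper does not actually prove this proposition: it is quoted verbatim as Corollary~3.9 from Catanese's paper \cite{cat} and used as a black box. So there is no in-paper proof to compare against; your task reduces to giving a self-contained argument, and the outline you propose is exactly the one behind the cited result.

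Your strategy is correct and standard. The reduction to the rigidity principle (every non-constant holomorphic map from a product of curves of genus $\geq 2$ to a single such curve factors through a projection) and the subsequent bookkeeping with the permutation $\sigma$ are fine. Two points in the rigidity step deserve a slightly cleaner treatment. First, after you show that on the open set $U \subset \prod_{\ell \neq j} C_\ell$ where the slice $f_j^c$ is non-constant the assignment $c \mapsto f_j^c$ is constant, say equal to $g \colon C_j \to D$, you should note that by continuity the slice equals $g$ also on $\overline{U}$; since $g$ is non-constant this forces $U$ to be all of $\prod_{\ell \neq j} C_\ell$, not merely a dense open subset. Second, you should make explicit why at most one direction $j$ can have a non-constant slice: once $f$ factors through $\pi_j$ on an open set, the slices in every other direction are automatically constant there, and by analytic continuation this propagates globally. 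With these two remarks your argument is complete; the appeal to de Franchis and to the vanishing $H^0(C_j,(f_j^c)^\ast T_D)=0$ is the right way to rigidify the family of slices.
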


\noindent 
Motivated by the proposition, we give the following definition: 

\begin{definition}
Let $G$ be a subgroup of $\Aut(C_1 \times \ldots \times C_n)$, where $g(C_i) \ge 2$. Then we define 
\[ 
G_i:=G \cap  \big(\Aut(C_1 \times \ldots \times \widehat{C_i} \times \ldots \times C_n) \times \Aut(C_i) \big)
\quad \quad \makebox{for all} \quad \quad  1 \leq i \leq n.
\]
\end{definition}

\noindent 
Note that the elements of $G_i$ are precisely those automorphisms that can be restricted to $C_i$.
We write $\psi_i\colon G_i \to \Aut(C_i)$ for the restriction homomorphism 
and denote its kernel by $K_i$. Clearly, the diagonal subgroup
\[
G^0:= G \cap \big(\Aut(C_1)\times \ldots \times \Aut(C_n)\big) \trianglelefteq G 
\] 
is equal to the intersection of the groups $G_i$ and 
the quotient $G/G^0$ embeds naturally in the permutation group $\mathfrak S_n$ of the 
factors of the product. If $G/G^0$ is trivial, we say that the action on the product is \emph{unmixed} and otherwise 
\emph{mixed}. Similarly,  the quotient of the product by $G$ is called of \emph{unmixed} or \emph{mixed type}, respectively. In this paper, unless otherwise stated, we will consider the mixed case  
 (see \cite{FG16} for the unmixed).

\bigskip\bigskip

\section{Mixed Actions}\label{GactProdCurves}

\bigskip

\noindent
An unmixed action of a finite group $G$ on a product of Riemann surfaces is given by 
\[
g(x_1, \ldots , x_n)= \big(\psi_1(g) x_1, \ldots , \psi_n(g) x_n\big), 
\quad \quad \makebox{for all} \quad \quad g \in G.
\]
In this section we show that after conjugation with a
suitable automorphism in 
\[
\Aut(C_1)\times \ldots \times \Aut(C_n)
\]
there are analogous formulas describing a mixed $G$-action in terms of the maps $\psi_i$.
Such a description, i.e. a \emph{normalized form} of the action, is of great importance for the following reasons:

\begin{itemize}
\item
it allows us to study the geometric properties of the quotient 
$(C_1 \times \ldots \times C_3)/G$
using Riemann surface theory,
\item
the formulas defining the normal
form can be used to construct an action of an abstract finite group $G$ on a product of compact
Riemann surfaces starting from suitable subgroups $G_i \leq G$  and group actions  $\psi_i \colon G_i \to \Aut(C_i)$. 
\end{itemize}

\noindent
For simplicity, we assume 
that $n=3$, but similar results can be obtained in all dimensions.  
For $n=2$ we refer the reader to \cite[Proposition 3.16 ii)]{cat}.
According to the index of $G^0$ in $G$, there are three sub-cases of the mixed case:
\[
G/G^0 \simeq \mathbb Z_2, \quad \quad G/G^0 \simeq \mathfrak A_3 \quad \quad \makebox{and} \quad \quad G/G^0 \simeq \mathfrak S_3.
\]
We call them index two, index three and index six case, respectively.

\bigskip
\noindent
{\bf Convention:} 
in the index two case we can assume that $C_2 \simeq C_3$. In 
the index three and six case it holds $C_1 \simeq C_2 \simeq C_3$. 
If we specialize in one of these cases, we may write  
$D \times C^2$ or $C^3$ instead of $C_1 \times C_2 \times C_3$.

\begin{proposition}\label{normalaction}
Let $G$ be a subgroup of the automorphism group of a product of three compact Riemann surfaces 
and $\nu \colon G \to G/G^0 \leq \mathfrak S_3$ be the projection map. 
\begin{itemize}
\item[i)]
In the index two case we fix an element 
$\delta \in G$ of the form  $\delta(x,y,z)=(\delta_1 x, \delta_3 z, \delta_2 y)$, i.e. $\nu(\delta) = (2,3)$.
Then, after 
conjugating with the automorphism 
$\xi(x,y,z) := (x, y,\delta_3 z)$, it holds
\[
\psi_3(g)= \psi_2(\delta g \delta^{-1}) \quad \makebox{for all} \quad  g\in G^0
\]
and the action is given by the formulas 
\begin{itemize}
\item[$\bullet$]
$\delta (x,y,z) = \big(\psi_1(\delta )x, z ,\psi_2(\delta^2) y \big)$ 
\item[$\bullet$]
$g(x,y,z) = \big(\psi_1(g)x,\psi_2(g)y,\psi_2(\delta g \delta^{-1})z\big) \quad $ for all $g \in G^0$. 
\end{itemize}
\item[ii)]
In the index three case we fix an element 
$\tau \in G$ of the form 
$\tau(x,y,z)=( \tau_2 y, \tau_3 z, \tau_1 x )$, i.e. 
$\nu(\tau) = (1,3,2)$. Then, after 
conjugating with the automorphism $\epsilon(x,y,z):=(x,\tau_2y,\tau_2 \tau_3 z)$,
it holds
\[
\psi_2(g)=\psi_1(\tau g \tau^{-1}) \quad \makebox{and} \quad \psi_3(g)= \psi_1(\tau^2 g \tau^{-2}) 
\quad \makebox{for all} \quad  g\in G^0 
\]
and the action is given by the formulas 
\begin{itemize}
\item[$\bullet$]
$\tau (x,y,z) = \big( y,z,\psi_1(\tau^3 ) x \big)$ 
\item[$\bullet$]
$g(x,y,z)= \big(\psi_1(g)x, \psi_1(\tau g \tau^{-1})y,\psi_1(\tau^2 g \tau^{-2})z\big)$ for all $g \in G^0$.
\end{itemize}
\item[iii)]
In the index six case we fix an element 
$\tau \in G$ of the form 
$\tau(x,y,z)=( \tau_2 y, \tau_3 z, \tau_1 x )$, i.e. 
$\nu(\tau) = (1,3,2)$.  Then, after 
conjugating with the automorphism 
$\epsilon(x,y,z):=(x,\tau_2y,\tau_2 \tau_3 z)$,
it holds 
\[
\psi_2(h)=\psi_1(\tau h \tau^{-1}) \quad \makebox{and} \quad  \psi_3(k)= \psi_1(\tau^2 k \tau^{-2})
\]
for all $h \in G_2$ and $k\in G_3$ and the action is given by the formulas 
\begin{itemize}
\item[$\bullet$]
$\tau(x,y,z) = \big( y,z,\psi_1(\tau^3) x \big)$ 
\item[$\bullet$]
$g(x,y,z) = \big(\psi_1(g)x, \psi_1(\tau g \tau^{-1})y,\psi_1(\tau^2 g \tau^{-2})z\big)$ 
\item[$\bullet$]
$f(x,y,z) = \big( \psi_1(f)x, \psi_1(\tau f \tau^{-2}) z, \psi_1(\tau^2 f \tau^{-1}) y \big)$ 
\end{itemize}
for all $g \in G^0$ and $f \in G_1\setminus G^0$. 
\end{itemize}
\end{proposition}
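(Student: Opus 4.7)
The plan is to exploit the explicit decomposition of $\Aut(C_1 \times C_2 \times C_3)$ supplied by Proposition \ref{semiCurve}: every element of $G$ splits canonically into factor-wise isomorphisms followed by a coordinate permutation $\nu(g) \in \mathfrak{S}_3$. Elements of $G^0$ act by $(x,y,z) \mapsto (\psi_1(g)x, \psi_2(g)y, \psi_3(g)z)$, while the chosen non-diagonal generator ($\delta$ or $\tau$) has a priori arbitrary factor components. The point of the normalization is to absorb these components into a conjugation by a suitable element of $\Aut(C_1) \times \Aut(C_2) \times \Aut(C_3)$, leaving only the compositional obstructions $\psi_1(\delta^2)$ and $\psi_1(\tau^3)$, which cannot be removed.

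I would treat the index two case first as the simplest model. Writing $\delta(x,y,z) = (\delta_1 x, \delta_3 z, \delta_2 y)$ and using $C_2 \simeq C_3$ to view $\delta_2, \delta_3$ as elements of $\Aut(C_2)$, a direct composition yields $\xi\delta\xi^{-1}(x,y,z) = (\delta_1 x,\, z,\, \delta_3\delta_2 y)$. Since $\delta^2 \in G^0$ acts as $(\delta_1^2 x, \delta_3\delta_2 y, \delta_2\delta_3 z)$, one recognizes $\delta_3\delta_2 = \psi_2(\delta^2)$, giving the first formula. For $g \in G^0$, conjugation by $\xi$ leaves $\psi_1$ and $\psi_2$ unchanged but transports $\psi_3(g)$ to $\delta_3\psi_3(g)\delta_3^{-1}$; expanding the element $\delta g \delta^{-1}$, which lies in $G^0$ since $G^0 \trianglelefteq G$, coordinate-wise then gives exactly $\psi_2(\delta g \delta^{-1}) = \delta_3\psi_3(g)\delta_3^{-1}$, yielding the second formula.

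The index three case is analogous with $\tau(x,y,z) = (\tau_2 y, \tau_3 z, \tau_1 x)$. The conjugation by $\epsilon(x,y,z) = (x, \tau_2 y, \tau_2\tau_3 z)$ is tailored so that the $y$- and $z$-components of $\epsilon\tau\epsilon^{-1}$ collapse to the identity, leaving only the composition $\tau_1\tau_3\tau_2 = \psi_1(\tau^3)$ in the third slot. The compatibilities $\psi_j(g) = \psi_1(\tau^{j-1} g \tau^{-(j-1)})$ for $g \in G^0$ then follow by expanding $\tau g \tau^{-1}$ and $\tau^2 g \tau^{-2}$ coordinate-wise and comparing the first component of each with the diagonal form.

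The index six case is technically the most delicate, and I expect this to be the main obstacle: the same conjugation $\epsilon$ must simultaneously normalize $\tau$ (permutation $(1,3,2)$) and produce the stated formula for every $f \in G_1 \setminus G^0$ (permutation $(2,3)$). The verification proceeds by expanding $\epsilon f \epsilon^{-1}$ directly; since the conjugates $\tau f \tau^{-2}$ and $\tau^2 f \tau^{-1}$ again have permutation $(2,3)$ and thus lie in $G_1$, the restriction $\psi_1$ is well-defined on them, and a coordinate-wise computation matches them to the factor components produced by $\epsilon f \epsilon^{-1}$. This also shows that the compatibility relations extend consistently from $G^0$ to the larger subgroups $G_2$ and $G_3$, as required by the statement.
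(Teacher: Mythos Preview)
Your approach is correct and is precisely the direct computation the paper alludes to; in fact the paper omits the proof entirely, stating only that ``the proof of the Proposition is just a calculation, we skip it.'' One cosmetic slip: in your index three sketch the composition appearing in the third slot after conjugation by $\epsilon$ is $\tau_2\tau_3\tau_1$ rather than $\tau_1\tau_3\tau_2$, and correspondingly $\psi_1(\tau^3)=\tau_2\tau_3\tau_1$; this does not affect the argument.
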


\bigskip
\noindent 
Since the proof of the Proposition is just a calculation, we skip it.

\bigskip
\noindent 
{\bf Convention:} 
from now on we assume that a subgroup 
$G \leq \Aut(C_1 \times C_2 \times C_3)$ is embedded  
in \emph{normal form} for a fixed choice of $\delta$ or $\tau$, respectively.

\bigskip
\noindent 
As already mentioned, a very important observation is that the formulas from Proposition \ref{normalaction}
provide a way to define mixed group actions on a product of three compact Riemann surfaces:

\bigskip

\begin{proposition}\label{constructaction}
Let $G$ be a finite group with a normal subgroup $G^0$ such that 
$G/G^0$ is isomorphic to $\mathbb Z_2$, $\mathfrak A_3$ or $\mathfrak S_3$.
Let $\nu\colon G \to G/G^0$ be the quotient map. 

\begin{itemize}
\item[i)]
In the index two case, let 
$\psi_1\colon G \to \Aut(D)$ and $\psi_2\colon G^0 \to \Aut(C)$ be group actions on compact Riemann surfaces 
with kernels $K_i$ such that 
\[
K_1 \cap K_2 \cap \delta K_2 \delta^{-1}  = \lbrace 1_G \rbrace
\] 
for an element $\delta \in G \setminus G^0$. Then 
the  formulas from Proposition \ref{normalaction} $i)$ define an embedding 
$G \hookrightarrow \Aut(D \times C^2)$.

\item[ii)]
In the index three case, let $\alpha\colon G/G^0 \to \mathfrak A_3$ be an isomorphism and
$\psi_1\colon G^0 \to \Aut(C)$ be a group action on a compact Riemann surface with kernel $K_1$ such that 
\[
K_1 \cap \tau K_1 \tau^{-1} \cap \tau^2 K_1 \tau^{-2} = \lbrace 1_G \rbrace
\] 
for an element $\tau \in G$ with $(\alpha \circ \nu)(\tau)=(1,3,2)$. Then
the  formulas from Proposition \ref{normalaction} $ii)$ define an embedding 
$G  \hookrightarrow \Aut(C^3)$. 
\item[iii)]
In the index six case, let $\alpha\colon G/G^0 \to \mathfrak S_3$ be an isomorphism.
Define 
\[
G_1:=(\alpha \circ \nu)^{-1}\big(\langle (2,3)\rangle \big)
\]
and let  
$\psi_1\colon G_1 \to \Aut(C)$ be a group action on a compact Riemann surface with kernel $K_1$ such that 
\[
K_1 \cap \tau K_1 \tau^{-1} \cap \tau^2 K_1 \tau^{-2} = \lbrace 1_G \rbrace
\] 
for an element $\tau \in G$ with $(\alpha \circ \nu)(\tau)=(1,3,2)$. 
Then the formulas from Proposition \ref{normalaction} $iii)$ define an embedding 
$G  \hookrightarrow \Aut(C^3)$. 
\end{itemize}
\end{proposition}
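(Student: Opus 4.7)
Define $\Phi\colon G \to \Aut(C_1 \times C_2 \times C_3)$ by the formulas of Proposition \ref{normalaction}. Each $\Phi(g)$ is automatically a biholomorphism, since it is built out of factor automorphisms $\psi_i(g)$ together with a coordinate permutation that is admissible thanks to the convention $C_2 \simeq C_3$ in case (i) and $C_1 \simeq C_2 \simeq C_3$ in cases (ii) and (iii). The remaining content of the proposition is therefore (a) that $\Phi$ is a group homomorphism and (b) that $\ker(\Phi) = \{1_G\}$.

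\textbf{Homomorphism property.} Every element of $G$ admits a unique normal form: $\delta^{\epsilon}g$ with $g \in G^0$, $\epsilon \in \{0,1\}$ in case (i); $\tau^{j}g$ with $g \in G^0$, $j \in \{0,1,2\}$ in case (ii); and $\tau^{j}f$ with $f \in G_1$, $j \in \{0,1,2\}$ in case (iii). The plan is to compute $\Phi(a)\Phi(b)$ for $a,b$ written in these normal forms and match the result with $\Phi(ab)$. After unwinding the coordinate permutations, each such check reduces to three ingredients: $\psi_1$ (and, in case (i), also $\psi_2$) being a homomorphism; the defining identities $\psi_3(g) = \psi_2(\delta g \delta^{-1})$ in case (i), and $\psi_{k+1}(g) = \psi_1(\tau^k g \tau^{-k})$ in cases (ii) and (iii); and the normality of $G^0 \trianglelefteq G$. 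As a representative sanity check, in case (i) both $\Phi(\delta)^2$ and $\Phi(\delta^2)$ evaluate to $(x,y,z) \mapsto \bigl(\psi_1(\delta^2)x,\psi_2(\delta^2)y,\psi_2(\delta^2)z\bigr)$.

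\textbf{Injectivity.} Suppose $\Phi(g) = \mathrm{id}$. If $\nu(g) \neq 1$, then $\Phi(g)$ non-trivially permutes the three coordinates of a generic point with independent entries and so cannot be the identity map; hence $g \in G^0$, and $\Phi(g)$ acts componentwise through $\psi_i(g)$. Simultaneous triviality translates, after reading off the formulas, into $g \in K_1 \cap K_2 \cap \delta K_2 \delta^{-1}$ in case (i), respectively into $g \in K_1 \cap \tau K_1 \tau^{-1} \cap \tau^2 K_1 \tau^{-2}$ in cases (ii) and (iii). For the orientation $\tau^k K_1 \tau^{-k}$ actually stated in the hypothesis (rather than the $\tau^{-k}K_1\tau^k$ one obtains directly), I would invoke that $\tau^3 \in G_1$ normalizes $K_1 \trianglelefteq G_1$. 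The hypothesis then forces $g = 1_G$.

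\textbf{Main obstacle.} The only genuinely delicate step is the bookkeeping in case (iii): one must verify that every expression like $\tau g \tau^{-1}$, $\tau f \tau^{-2}$, or $\tau^2 f \tau^{-1}$ appearing in the formulas actually lies in $G_1$, which is the domain of $\psi_1$. This is a short calculation in $\mathfrak S_3$ using $\nu(\tau) = (1,3,2)$, showing that conjugation by $\tau$ cyclically permutes $G_1, G_2, G_3$. Once these $\nu$-image checks are cleanly in place, the homomorphism property and the injectivity statement reduce, in all three cases, to routine algebra inside $G$ together with the fact that $\psi_1$ is a homomorphism.
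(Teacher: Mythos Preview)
The paper does not actually supply a proof of this proposition: it is presented immediately after Proposition~\ref{normalaction} (whose proof is explicitly skipped as ``just a calculation'') as its evident converse, and the text moves on without further argument. Your plan is correct and fills in precisely what the paper leaves implicit; in particular, your observation that $\tau^3 \in G_1$ normalizes $K_1$, needed to reconcile the two orientations $\tau^k K_1 \tau^{-k}$ and $\tau^{-k} K_1 \tau^{k}$, is exactly the small point one has to notice.
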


\bigskip

\begin{definition}[{cf. \cite[Definition 3.1]{FG16}}]
As in the unmixed case, we say that the $G$-action is \emph{minimal} if 
$K_i \cap K_j$ is trivial for all $i \neq j$ and \emph{absolutely faithful}
if the kernels $K_i$ are trivial. 
\end{definition}

\bigskip
\noindent
We point out that every threefold  isogenous to a product can be obtained by a unique minimal action 
(cf. \cite[Proposition 3.13]{cat}). 
Hence, from now on, we assume that the action of $G$ is minimal. 
Note that the kernels $K_i$ are related: we have 
$K_3 = \delta K_2 \delta^{-1}$ in the index two case, whereas $K_2 = \tau^2 K_1 \tau^{-2}$ and 
$K_3 = \tau K_1 \tau^{-1}$ in the index three and index six case.

\bigskip
\noindent 
In Proposition \ref{constructaction} we described the building data to define a mixed group action
on a product of three curves. Since we want to construct  
threefolds isogenous to a product, we shall give conditions that ensure the freeness of the action on 
the product in terms of the maps $\psi_i$. 
Such conditions are easily deduced from the formulas in Proposition \ref{normalaction} 
(cf. \cite[Proposition 3.16 ii)]{cat} for the two dimensional case). To phrase them in a compact way, the following definition is convenient.

\begin{definition}
Let  $\psi\colon H \to \Aut(C)$  a group action on a Riemann surface.  The stabilizer set $\Sigma \subset H$
of $\psi$ is defined as the set of elements admitting at least one fixed point on $C$. 
\end{definition}

\begin{proposition}\label{freenesscond}
Let $G$ be a subgroup of the automorphism group of a product $C_1 \times C_2 \times C_3$ of compact Riemann surfaces
and $\Sigma_i$ be the stabilizer set of $\psi_i \colon G_i \to \Aut(C_i)$. Then the freeness of the $G$-action is equivalent to:  
\begin{itemize}
\item[a)] index two case
\begin{itemize}
\item[i)]
$\Sigma_1 ~ \cap ~ \Sigma_2 ~ \cap ~ \delta \Sigma_2 \delta^{-1} = \lbrace 1_G \rbrace$ and
\item [ii)] 
for all  $g \in G^0$ with $\delta g \in \Sigma_1$, it holds 
$(\delta g)^2 \notin \Sigma_2$.
\end{itemize}
\item[b)] index three case
\begin{itemize}
\item[i)]
$\Sigma_1 ~ \cap ~ \tau \Sigma_1 \tau^{-1} ~ \cap ~ \tau^2 \Sigma_1\tau^{-2} = \lbrace 1_G \rbrace$ and
\item[ii)]
for all  $g \in G^0$ it holds $(\tau g)^3 \notin \Sigma_1$. 
\end{itemize}
\item[c)]
index six case
\begin{itemize}
\item[i)]
$\Sigma_1 ~ \cap ~ \tau \Sigma_1 \tau^{-1} ~ \cap ~ \tau^2 \Sigma_1 \tau^{-2} = \lbrace 1_G \rbrace$,
\item[ii)]
for all  $g \in G^0$ it holds $(\tau g)^3 \notin \Sigma_1$ and
\item[iii)]
for all  $f \in G_1 \setminus G^0$ with $f \in \Sigma_1$, it holds 
$\tau f^2 \tau^{-1} \notin \Sigma_1$.
\end{itemize}
\end{itemize}
\end{proposition}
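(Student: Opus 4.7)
My plan is to translate the freeness condition --- ``no non-identity element of $G$ fixes any point of $C_1 \times C_2 \times C_3$'' --- into the stabilizer-set language by a coset-by-coset direct calculation, using the explicit action formulas from Proposition \ref{normalaction}. I will partition $G$ into cosets of $G^0$ (and, in the index six case, further separate the cosets inside $G_1$ from those outside). For each coset representative I expand $h(x,y,z) = (x,y,z)$ using the formulas of Proposition \ref{normalaction} to obtain a system of three equations in the $\psi_i$'s, then reduce it by back-substitution to a single necessary and sufficient condition on the group element.

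First I handle the diagonal elements $g \in G^0$, whose action is coordinate-wise: such a $g$ has a fixed point iff $g \in \Sigma_1 \cap \Sigma_2 \cap \Sigma_3$. The identities from Proposition \ref{normalaction} --- namely $\psi_3(g) = \psi_2(\delta g \delta^{-1})$ in the index two case, and $\psi_i(g) = \psi_1(\tau^{i-1} g \tau^{-(i-1)})$ in the index three and six cases --- rewrite $\Sigma_3$ (and $\Sigma_2$) in terms of $\Sigma_2$ (resp.\ $\Sigma_1$), producing condition (i) in each of parts (a), (b), (c). A small consistency check --- using that $\delta^2 \in G^0$ and that $\Sigma_2$ is stable under $G^0$-conjugation --- is needed to match the precise form $\delta \Sigma_2 \delta^{-1}$ in (a)(i) with the $\delta^{-1} \Sigma_2 \delta$ that appears naturally from the formula.

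Next I treat the non-diagonal cosets. For $h = \delta g$ in the index two case, expanding $h(x,y,z) = (x,y,z)$ yields $\delta g \in \Sigma_1$ from the first coordinate, while the coupled second and third coordinate equations reduce by back-substitution to $\psi_2((\delta g)^2) y = y$, giving (a)(ii). For $h = \tau g$ in the index three and index six cases all three coordinates cycle; one round of elimination collapses them to $\psi_1((\tau g)^3) x = x$, i.e.\ $(\tau g)^3 \in \Sigma_1$, giving (b)(ii) and (c)(ii). For $h = f \in G_1 \setminus G^0$ in the index six case, the first coordinate forces $f \in \Sigma_1$ and the $y$--$z$ coupling collapses to $\psi_1(\tau f^2 \tau^{-1}) y = y$, giving (c)(iii).

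Finally, I need to check that all cosets are accounted for: the coset $\tau^2 G^0$ in indices three and six yields no new condition, since an element admits a fixed point iff its inverse does, and in the index six case the cosets $\tau f G^0$ and $\tau^2 f G^0$ are $\tau^{\pm 1}$-conjugate to $f G^0$, so condition (c)(iii) covers them as well. The whole argument is a mechanical expansion once Proposition \ref{normalaction} is in hand; the main obstacle, modest as it is, is bookkeeping --- keeping track of which $\psi_i$ governs which coordinate, and managing the various $\delta$- and $\tau$-conjugations that appear when composing the coupled equations.
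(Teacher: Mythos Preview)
Your proposal is correct and is precisely the approach the paper indicates. Note that the paper does not actually give a proof of this proposition: it only states that ``such conditions are easily deduced from the formulas in Proposition \ref{normalaction}'' and refers to \cite[Proposition 3.16 ii)]{cat} for the two-dimensional analogue. Your coset-by-coset expansion using the normalized action formulas is exactly this deduction written out, and the details you supply (the $\delta^2 \in G^0$, $\tau^3 \in G^0$ consistency checks for the form of condition (i); the back-substitution reducing the coupled equations to $(\delta g)^2 \in \Sigma_2$, $(\tau g)^3 \in \Sigma_1$, $\tau f^2\tau^{-1} \in \Sigma_1$; and the reduction of the remaining cosets $\tau^2 G^0$, $G_2\setminus G^0$, $G_3\setminus G^0$ to those already treated via inversion and conjugation) are all correct. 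One tiny sharpening: for the last reduction it is cleaner to invoke the identities $G_1\setminus G^0 = \tau(G_2\setminus G^0)\tau^{-1} = \tau^2(G_3\setminus G^0)\tau^{-2}$ (noted in the paper after Theorem \ref{valuesnotinG0}) rather than the slightly imprecise phrase ``$\tau^{\pm 1}$-conjugate,'' but the content is the same.
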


\begin{rem}
For completeness, we want to mention that an unmixed action is free if an only the intersection 
$\Sigma_1 ~ \cap ~ \Sigma_2 ~ \cap ~  \Sigma_3$ is trivial.
\end{rem}

\bigskip

\begin{corollary}\label{shortexactnonsplit}
Let $G$ be a subgroup of the automorphism group of a product of three curves. 
\begin{itemize}
\item[a)]
Assume that $G^0 \trianglelefteq G$ is of index six and $G$ is acting freely on the product, then 
the short exact sequence 
\[
1 \longrightarrow G^0 \longrightarrow  G \longrightarrow \mathfrak S_3 \longrightarrow 1
\]
does not split. 
\item[b)]
Assume that $G^0 \trianglelefteq G$ is of index three and condition $i)$ in 
Proposition \ref{freenesscond} $b)$ holds. Then $ii)$ in Proposition \ref{freenesscond} $b)$ is equivalent to 
the condition, that the short exact sequence 
\[
1 \longrightarrow G^0 \longrightarrow  G \longrightarrow \mathfrak A_3 \longrightarrow 1
\]
does not split. 
\end{itemize}
\end{corollary}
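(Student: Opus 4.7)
The plan is to establish part (b) first by a direct group-theoretic argument in the normalized form, and then deduce part (a) from (b) by restricting to the index-three subgroup $G' := \nu^{-1}(\mathfrak{A}_3) \leq G$.

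For the easy direction of (b), a splitting of $1 \to G^0 \to G \to \mathfrak{A}_3 \to 1$ yields an order-three lift $\tilde{\tau} \in G$ of the three-cycle $(1,3,2)$. Writing $\tilde{\tau} = \tau g$ for some $g \in G^0$, we have $(\tau g)^3 = \tilde{\tau}^3 = 1_G \in \Sigma_1$, so condition (ii) fails at this $g$.

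For the converse direction of (b), suppose (ii) fails, so there exists $g \in G^0$ with $h := (\tau g)^3 \in \Sigma_1$, and set $\tilde{\tau} := \tau g$. First I would note that the stabilizer set $\Sigma_1 \subseteq G_1 = G^0$ is closed under conjugation by $G^0$: for $a \in G^0$, the automorphism $\psi_1(a\sigma a^{-1}) = \psi_1(a)\psi_1(\sigma)\psi_1(a)^{-1}$ fixes $\psi_1(a)(x)$ whenever $\psi_1(\sigma)$ fixes $x$. Together with the normality of $G^0$ in $G$, this yields
\[
\tilde{\tau}^k \, \Sigma_1 \, \tilde{\tau}^{-k} = \tau^k \, \Sigma_1 \, \tau^{-k} \quad \text{for } k = 1, 2,
\]
so condition (i) rewrites as $\Sigma_1 \cap \tilde{\tau}\Sigma_1\tilde{\tau}^{-1} \cap \tilde{\tau}^2 \Sigma_1 \tilde{\tau}^{-2} = \{1_G\}$. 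Now $h = \tilde{\tau}^3$ lies in the center of $\langle\tilde{\tau}\rangle$, hence $\tilde{\tau}^k h \tilde{\tau}^{-k} = h$ for every $k$; combined with $h \in \Sigma_1$ this places $h$ in all three translates above, and condition (i) forces $h = 1_G$. Thus $\tilde{\tau}^3 = 1_G$, and the map $(1,3,2) \mapsto \tilde{\tau}$ extends to a splitting.

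For part (a), any splitting $s$ of the $\mathfrak{S}_3$-sequence restricts along $\mathfrak{A}_3 \leq \mathfrak{S}_3$ to a splitting of $1 \to G^0 \to G' \to \mathfrak{A}_3 \to 1$. Freeness of the $G$-action on $C_1 \times C_2 \times C_3$ restricts to freeness of the $G'$-action, so Proposition \ref{freenesscond}(b) yields both (i) and (ii) for $G'$; part (b) then forces the restricted sequence not to split, contradicting $s|_{\mathfrak{A}_3}$.

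The main obstacle is the converse direction of (b): strengthening ``$(\tau g)^3$ has a fixed point on $C_1$'' to ``$(\tau g)^3 = 1_G$''. The crucial point is that $\tilde{\tau}^3$ is automatically centralized by $\tilde{\tau}$, which, together with the $G^0$-invariance of $\Sigma_1$, promotes a single membership in $\Sigma_1$ to membership in each of the three conjugate translates constrained by (i).
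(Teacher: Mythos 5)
Your proof is correct and follows essentially the same approach as the paper: a splitting is detected by an order-three element $\tilde\tau=\tau g$, whose cube $1_G\in\Sigma_1$ violates freeness condition $ii)$. You in fact supply more detail than the paper (which dispatches $b)$ with ``the proof is similar''), since your converse direction of $b)$ — using that $\tilde\tau^3$ is centralized by $\tilde\tau$ and that $\Sigma_1$ is invariant under $G^0$-conjugation to place $\tilde\tau^3$ in all three translates constrained by $i)$ — is exactly the argument needed to upgrade $(\tau g)^3\in\Sigma_1$ to $(\tau g)^3=1_G$.
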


\begin{proof}
$a)$
A short exact sequence $1 \longrightarrow G^0 \longrightarrow  G \longrightarrow \mathfrak S_3 \longrightarrow 1$
splits, if and only if there exist elements  $a,b \in G \setminus G^0$ such that $\ord(a)=2$, $\ord(b)=3$ and $aba=b^{-1}$.  
Assume that the sequence splits, then there exist elements $a,b$ as above. Since $b^2 \notin G^0$ we 
can assume that $b=\tau g \in \tau G^0$.
This leads to the contradiction $(\tau g)^3=1 \in \Sigma_1$. 
The proof of $b)$ is similar.
\end{proof}

\bigskip
\bigskip
\section{The algebraic datum}\label{group_descr}

\bigskip
The aim of this section is to give a \emph{group theoretical description} of a threefold 
\[
X=(C_1 \times C_2 \times C_3)/G
\]
isogenous to a product of mixed type. We refer the reader to \cite[Section 3]{FG16} for the unmixed case. 
In the previous section, we worked out a description of mixed actions using the maps
\[
\psi_i \colon G_i \to \Aut(C_i); 
\]
dividing by their kernels $K_i$, we obtain effective actions of the factor groups $G_i/K_i$ on the curves $C_i$, which can be characterized by \emph{Riemann's existence theorem}: 

\begin{theorem}[{\cite[cf. Sections III.3 and III. 4]{mir}}]
An effective action $\psi \colon H \to \Aut(C)$
of a finite group $H$ on a compact Riemann surface $C$ is given and completely described by 
the following 
\begin{itemize}
\item
a compact Riemann surface $C'$,
\item
a finite set $\mathcal B \subset C'$ (the branch points) and 
\item
a surjective homomorphism 
$\eta\colon \pi_1\big(C' \setminus \mathcal B, q_0\big) \to H \quad$
(the monodromy map). 
\end{itemize}
\end{theorem}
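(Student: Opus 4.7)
The plan is to establish a bijective correspondence: pairs $(C,\psi)$, taken up to $H$-equivariant biholomorphism of $C$, correspond to triples $(C',\mathcal B,\eta)$, taken up to a change of lifted basepoint (which conjugates $\eta$ by an element of $H$). I would argue in both directions and then verify mutual inverseness.

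In the forward direction I start from an effective action $\psi$ and set $C' := C/H$. Since $H$ is finite and acts holomorphically on $C$, the local normal form of a cyclic stabiliser near a fixed point (rotation $z\mapsto \zeta z$) supplies local coordinates on $C'$, so $C'$ is a compact Riemann surface and the projection $\pi\colon C \to C'$ is holomorphic of generic degree $|H|$. The branch locus $\mathcal B\subset C'$ is the finite set of images of points with non-trivial stabiliser. Restricting to the complement, $\pi\colon C\setminus\pi^{-1}(\mathcal B)\to C'\setminus\mathcal B$ is an unramified Galois cover with deck group $H$, and a choice of basepoint $q_0\in C'\setminus\mathcal B$ together with a lift $\tilde q_0\in C$ produces, via the standard classification of covering spaces, a surjective monodromy homomorphism $\eta\colon \pi_1(C'\setminus\mathcal B,q_0)\to H$, with surjectivity reflecting the connectedness of $C$.

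For the converse I would invoke the Galois correspondence: $\ker(\eta)$ determines a connected unramified Galois cover $p\colon U\to C'\setminus\mathcal B$ with deck group $H$, and $U$ carries the unique complex structure making $p$ a local biholomorphism. The main analytic step is to compactify $U$ to a compact Riemann surface $C$ in an $H$-equivariant way. Near each $b\in\mathcal B$ I choose a small coordinate disk $\Delta$ with $\Delta^*:=\Delta\setminus\{b\}$; if $\gamma_b$ is a peripheral loop around $b$, then $p^{-1}(\Delta^*)$ splits into punctured disks indexed by the orbits of $\eta(\gamma_b)$ acting by left-translation on $H$, and on each component $p$ is modelled by $z\mapsto z^{m}$, where $m$ is the order of the corresponding stabiliser. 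Filling in one point per component produces $C$, and the local model shows the $H$-action extends holomorphically with the stabiliser of each added point conjugate to $\langle\eta(\gamma_b)\rangle$.

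The hard part will be precisely this compactification and the verification that the extended action is effective and satisfies $C/H\cong C'$: effectiveness follows because any element of $H$ acting trivially on $C$ would also act trivially on $U$ and thus lie in the trivial kernel of the deck-group action on $U$, while the isomorphism $C/H\cong C'$ is built into the local charts by construction. Finally, the two constructions are mutual inverses up to the stated ambiguities, since the uniqueness in the covering space classification together with the Riemann removable singularity theorem guarantee that the reconstructed pair recovers the original $(C,\psi)$.
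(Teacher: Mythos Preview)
The paper does not supply its own proof of this statement; it is quoted as a classical fact with a reference to Miranda's textbook \cite[Sections III.3 and III.4]{mir}. Your outline is a correct and standard reconstruction of the argument found there: quotienting to obtain $C'$ and the branch set $\mathcal B$, reading off the monodromy $\eta$ from the unramified Galois cover over $C'\setminus\mathcal B$, and conversely building the cover from $\ker(\eta)$ and filling in the punctures via the local model $z\mapsto z^m$. The only remark I would add is that you should state explicitly which equivalence you impose on the triples $(C',\mathcal B,\eta)$---besides conjugation of $\eta$ by an element of $H$ (change of lift $\tilde q_0$), one also needs to allow the action of automorphisms of the pointed surface $(C'\setminus\mathcal B,q_0)$ on $\pi_1$ if one wants a genuine bijection up to $H$-equivariant isomorphism on the other side; otherwise the correspondence is many-to-one in the stated form. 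This does not affect how the theorem is used in the paper, where one only needs the existence direction together with the identification of the stabiliser set via the images $\eta(\gamma_i)$.
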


\bigskip
\noindent 
Recall that the fundamental group of $C' \setminus \mathcal B$ has a presentation of the form 
\[
\pi_1\big(C' \setminus \mathcal B, q_0\big)= 
\big\langle \gamma_1, \ldots, \gamma_r, \alpha_1,\beta_1,\ldots, \alpha_{g'},\beta_{g'} ~ \big\vert ~
\gamma_1 \cdots \gamma_r \cdot \prod_{i=1}^{g'} [\alpha_i,\beta_i] \big\rangle.
\]
Here, the generators $\gamma_i$ are simple loops around the branch points 
(see Figure \ref{geometric basis}).

\begin{figure}
\begin{center}
\begin{tikzpicture}[scale=0.6, transform shape]
\draw[black, line width=1.0pt] plot [smooth, tension=1] coordinates 
{ (2,0) (6,3) (4,4.5) (3,2.5) (1.5,4) (0,3) (0.5,1.5) (-1.5,2.5) (-2,0.5) (2,0)};
\draw [black,domain=0:90] plot ({4.5 + 0.7*cos(\x)}, {3 + 0.7*sin(\x)});
\draw [black,domain=0:90] plot ({5.3 - 0.7*sin(\x)}, {3.8 - 0.7*cos(\x)});
\draw [black,domain=0:90] plot ({0.8 + 0.7*cos(\x)}, {2.5 + 0.7*sin(\x)});
\draw [black,domain=0:90] plot ({1.6 - 0.7*sin(\x)}, {3.3 - 0.7*cos(\x)});
\draw[line width=0.5pt]
(-0.8,1.7) circle (0.2);
\draw[line width=0.5pt]
(-1.8,1.2) circle (0.2);
\draw[->,>=stealth] (-0.6,1.7)-- (-0.61,1.8);
\draw[->,>=stealth] (-1.6,1.2)-- (-1.61,1.3);
\filldraw [black] (-0.8,1.7) circle (0.5pt);
\filldraw [black] (-1.8,1.2) circle (0.5pt);
\filldraw [black] (1.5,0.5) circle (0.5pt);
\node at (1.9,0.5) {$q_0$};
\draw[line width=0.5pt](1.5,0.5) to [out=100,in=290] (-0.8,1.5);
\draw[line width=0.5pt](1.5,0.5) to [out=140,in=300] (-1.8,1.0);
\node at (-1,0.8) {$\gamma_1$};
\node at (-0.3,1.4) {$\gamma_2$};
\node at (0.8,1.8) {$\alpha_1$};
\node at (2.7,2) {$\beta_1$};
\draw[->,>=stealth] (2.124,1.6)-- (2.179,1.7);
\draw[line width=0.5pt](1.5,0.5) to [out=100,in=300] (1.5,2.6);
\draw[densely dashed](1.5,2.6) to [out=60,in=120] (2.5,2.8);
\draw[line width=0.5pt](2.5,2.75) to [out=270,in=70] (1.5,0.5);
\draw[->,>=stealth] (0.795,2.5)-- (0.76,2.6);
\draw[line width=0.5pt](1.5,0.5) to [out=95,in=180] (1.1,3.5); 
\draw[line width=0.5pt] (1.1,3.5) to [out=0,in=75] (1.5,0.5);
\end{tikzpicture}\caption{generators of $\pi_1$ \label{geometric basis}}
\end{center}
\end{figure}
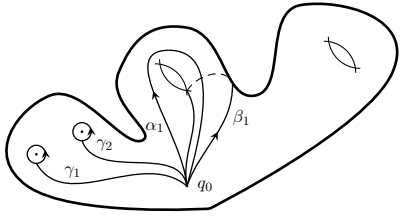

\noindent
Note that the images of the generators of $\pi_1\big(C' \setminus \mathcal B, q_0\big)$ under the monodromy map 
\[
h_i:=\eta(\gamma_i), \quad a_i:=\eta(\alpha_i) \quad \makebox{and} \quad  b_i:=\eta(\beta_i)
\]
generate $H$ and satisfy the relation 
\[
h_1 \cdots  h_r \cdot \prod_{i=1}^{g'}[a_i,b_i] = 1  \quad \quad \quad (\ast).
\]
This provides the motivation for our next definition.

\begin{definition}
Let $m_1,\ldots,m_r \geq 2$  and  $g' \geq 0$ be integers and $H$ be a finite group.  
A \emph{generating vector} for $H$  of type $[g';m_1, \ldots ,m_r]$ is a $(2g'+ r)$-tuple 
\[
(h_1, \ldots , h_r, a_1,b_1, \ldots , a_{g'},b_{g'})
\]
of group elements which generate $H$, satisfy the relation $(\ast)$ and fulfill the condition 
$\ord(h_i)= m_i$  for all $1 \leq i \leq r$. 
\end{definition}

\begin{rem}
Let  $\psi \colon H \to \Aut(C)$ be an effective group action and 
$V=(h_1, \ldots , h_r, a_1,b_1, \ldots , a_{g'},b_{g'})$ be an associated generating vector of $H$.
Since the cyclic groups $\langle h_i \rangle$ and their conjugates provide the non-trivial stabilizers 
of the action, the stabilizer set of $\psi$ can be written as
\[
\Sigma= \bigcup_{h \in H} ~ \bigcup_{i \in \mathbb Z} ~ \bigcup_{j = 1}^r\left \{  hh_j^ih^{-1}\right\}. 
\]
For this reason it makes also sense to refer to $\Sigma$ as the stabilizer set associated to the
generating vector $V$. 
\end{rem}

\noindent 
Similarly to the unmixed case (see \cite[Section 3]{FG16}), we attach to a threefold 
isogenous to a product of mixed type certain algebraic data, reflecting the geometry of the threefold.
We have the groups $G$ and $G^0$, the kernels $K_i$ and 
the embedding $G/G^0 \leq \mathfrak S_3$. In the index three case we choose an element 
$\tau \in G$ with residue class 
$(1,3,2)$; in the index six case we choose elements $\tau, h \in G$ with classes 
$(1,3,2)$ and $(2,3)$.
For each $\overline{\psi_i}\colon G_i/K_i\to \Aut(C_i)$ we can choose 
a generating vector $V_i$ for $G_i/K_i$ of type $T_i$. Not that the latter is not unique, 
only the type $T_i$ is uniquely determined. Since we work in the mixed case, 
the actions $\psi_i$ are related according to Proposition \ref{normalaction}. This 
implies that some of the data is redundant. To keep track of it, we define: 
 
\begin{definition}\label{algebraicdata}
To a threefold $X$ isogenous to a product of mixed type we attach the tuple 
\begin{itemize}
\item
$(G,G^0,K_1,K_2,V_1,V_2)$ in the index two case,
\item
$(G,G^0,K_1,\tau,V_1)$ in the index three case, 
\item
$(G,G^0,K_1,\tau,h,V_1)$
in the index six case,  
\end{itemize}
and call it an \emph{algebraic datum} of $X$.
\end{definition}

\noindent 
Thanks to Riemann's existence theorem, Proposition \ref{constructaction} and \ref{freenesscond} 
we have a way to construct threefolds isogenous 
to a product starting from group theoretical data. For the two dimensional analogue, 
we refer to \cite[Proposition 2.5]{bauer}.  

\bigskip

\begin{proposition}\label{converse}
Let $G$ be a finite group and $G^0 \trianglelefteq G$ be a normal subgroup such that 
$G/G^0 \leq \mathfrak S_3$ and let $\nu\colon G \to G/G^0$ be the quotient map. 
\begin{itemize}
\item[a)]
Assume that $G/G^0 \simeq \mathbb Z_2$. Let 
$\delta \in G \setminus G^0$, $K_1 \trianglelefteq G$ and $K_2 \trianglelefteq G^0$ be normal subgroups such that 
\[
K_1 \cap K_2 = \lbrace 1_G \rbrace \quad \makebox{and} \quad K_2 \cap \delta K_2 \delta^{-1} = \lbrace 1_G \rbrace.
\]
Let $V_1$ be a generating vector for $G/K_1$ and $V_2$ a generating vector for $G^0/K_2$. 
Let $\Sigma_i \subset G$ be the pre-images of the stabilizer sets 
associated to the generating vectors $V_i$ under the quotient maps 
\[
G \to G/K_1 \quad \makebox{and} \quad G^0 \to G^0/K_2. 
\]
Assume that the freeness conditions from Proposition \ref{freenesscond} $a)$ hold. 
Then there exists a threefold $X$ isogenous to a product with algebraic datum 
\[
(G,G^0,K_1,K_2,V_1,V_2).
\]
\item[b)]
Assume that $G/G^0 \simeq \mathbb Z_3$. 
Let $\tau \in G \setminus G^0$ and $K_1 \trianglelefteq G^0$ such that 
\[
K_1 \cap \tau K_1 \tau^{-1}=\lbrace 1_G \rbrace.
\]
Let $V_1$ be a generating vector for $G^0/K_1$ and 
$\Sigma_1 \subset G^0$ be the pre-image of the stabilizer set 
associated to the generating vector $V_1$ under the quotient map
\[
G^0 \to G^0/K_1. 
\]
Assume that the freeness conditions from Proposition \ref{freenesscond} $b)$ hold. 
Then there exists a threefold $X$ isogenous to a product with algebraic datum 
\[
(G,G^0,K_1,\tau,V_1).
\] 
\item[c)]
Assume that $G/G^0 \simeq \mathfrak S_3$. 
Let $\tau, h \in G \setminus G^0$ such that $\tau^2 \notin G^0$ and $h^2 \in G^0$.
Define the subgroup 
\[
G_1:=\langle h, G^0 \rangle \leq G.
\] 
Let  $K_1 \trianglelefteq G_1$ be a normal subgroup such that 
\[ 
K_1 \cap \tau K_1 \tau^{-1}=\lbrace 1_G \rbrace.
\]
Let 
$V_1$ be a generating vector for $G_1/K_1$  and 
$\Sigma_1 \subset G_1$ be the pre-image of the stabilizer set 
associated to the generating vector $V_1$ under the quotient map
\[
G_1 \to G_1/K_1. 
\]
Assume that the freeness conditions from Proposition \ref{freenesscond} $c)$ hold.
Then there exists a threefold $X$ isogenous to a product with algebraic datum $(G,G^0,K_1,\tau,h,V_1)$.

\end{itemize}
\end{proposition}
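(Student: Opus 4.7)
The plan is to treat the three cases in parallel, since each is an inverse to the corresponding item of Proposition \ref{constructaction}, combined with the free-action criterion of Proposition \ref{freenesscond}. The core idea is: given the group-theoretic data, first produce the curves and their non-effective actions via Riemann's existence theorem, then assemble the mixed action on the product with Proposition \ref{constructaction}, and finally verify freeness with Proposition \ref{freenesscond}.

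I would begin with case $a)$. Applying Riemann's existence theorem to the generating vector $V_1$ of $G/K_1$ produces a compact Riemann surface $D$ together with an effective action $\overline{\psi_1}\colon G/K_1 \to \Aut(D)$, and similarly $V_2$ yields $C$ and $\overline{\psi_2}\colon G^0/K_2 \to \Aut(C)$. Composing with the quotient maps gives $\psi_1\colon G \to \Aut(D)$ and $\psi_2\colon G^0 \to \Aut(C)$, whose kernels are precisely $K_1$ and $K_2$. The hypothesis $K_1 \cap K_2 = \{1_G\}$ implies the trivial-intersection requirement $K_1 \cap K_2 \cap \delta K_2 \delta^{-1} = \{1_G\}$ of Proposition \ref{constructaction}\,$i)$, so the normal-form formulas embed $G \hookrightarrow \Aut(D \times C^2)$. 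A key bookkeeping remark is that the stabilizer set of $\psi_i$ coincides with the pre-image $\Sigma_i$ of the stabilizer set of $\overline{\psi_i}$: indeed, an element has a fixed point on $C_i$ if and only if its class in $G_i/K_i$ does. Hence the freeness hypotheses of the proposition are exactly the conditions of Proposition \ref{freenesscond}\,$a)$, so the $G$-action on $D \times C^2$ is free and the quotient $X$ is a threefold isogenous to a product. Reading off the data attached to $X$ in Definition \ref{algebraicdata} recovers the tuple $(G,G^0,K_1,K_2,V_1,V_2)$, completing this case.

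Cases $b)$ and $c)$ proceed along exactly the same lines, but with only one curve $C$ playing a role. For $b)$, Riemann's existence theorem applied to $V_1$ gives $\overline{\psi_1}\colon G^0/K_1 \to \Aut(C)$, which lifts to $\psi_1$ with kernel $K_1$; the condition $K_1 \cap \tau K_1 \tau^{-1} = \{1_G\}$ implies the triple intersection of Proposition \ref{constructaction}\,$ii)$ is trivial, so one obtains $G \hookrightarrow \Aut(C^3)$, and freeness is supplied by Proposition \ref{freenesscond}\,$b)$. For $c)$ one first observes that the subgroup $G_1 = \langle h, G^0\rangle$ has index two in $G$ and surjects onto $\langle (2,3)\rangle \subset \mathfrak{S}_3$, matching the framework of Proposition \ref{constructaction}\,$iii)$; the same argument then builds the embedding and verifies freeness. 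In each case the attached algebraic datum manifestly coincides with the input, by construction.

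I do not expect a substantial obstacle here: the proposition is essentially an assembly statement whose content is spread across the earlier technical results. The only points requiring care are (i) the alignment between the stabilizer sets $\Sigma_i$ defined as pre-images in $G_i$ and the stabilizer sets of the lifted actions $\psi_i$, which must be checked so that Proposition \ref{freenesscond} applies verbatim, and (ii) the observation that the weaker pairwise kernel conditions given in the hypotheses imply the triple-intersection conditions of Proposition \ref{constructaction}. Once these two compatibilities are noted, the proof reduces to an application of the cited results.
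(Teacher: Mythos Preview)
Your proposal is correct and matches the paper's approach exactly: the paper does not give a formal proof of this proposition but explicitly indicates, in the sentence immediately preceding its statement, that it follows from Riemann's existence theorem together with Propositions~\ref{constructaction} and~\ref{freenesscond}. Your write-up simply fills in the straightforward details of that assembly, including the two compatibility checks you flag, both of which are immediate.
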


\noindent 
\begin{rem}\label{moduli}
In Proposition \ref{converse} we actually construct families of threefolds, they depend on   
the choice of the complex structure on the quotient curves $C_i'$ and 
the branch points $\mathcal B_i \subset C_i'$, that is 
\begin{itemize}
\item $3(g_1'+g_2')-6 +r_1+r_2$ parameters in the index two case and 
\item $3g_1'-3 +r_1$ parameters in the index three and index six case, respectively.
\end{itemize}
The integers $g_i'=g(C_i')$ and $r_i=|\mathcal B_i|$ are given in terms of the types 
$T_i=[g_i';m_{i,1}, \ldots ,m_{i,r_i}]$  of the generating vectors $V_i$. 
\end{rem}

\section{The Hodge diamond}\label{hodgenumbers}
 
\noindent 
In this section we explain how to compute the \emph{Hodge numbers} of a threefold 
$X=(C_1 \times C_2 \times C_3)/G$ isogenous to a product of mixed type from an algebraic datum of $X$. 

\bigskip
\noindent 
The idea is to use representation theory: 
the action of $G$ on the product induces representations 
\[
\phi_{p,q} \colon G  \to  \GL\big(H^{p,q}(C_1 \times C_2 \times C_3)\big), \quad \quad
 g \mapsto  [\omega \mapsto (g^{-1})^{\ast}\omega]
\]
on the \emph{Dolbeault cohomology groups},  whose characters are denoted 
by $\chi_{p,q}$.
Since the action is free, the Hodge numbers $h^{p,q}(X)$ are given
 as the dimensions of the 
$G$-invariant parts of the Dolbeault groups $H^{p,q}(C_1 \times C_2 \times C_3)$ i.e. 
\[
h^{p,q}(X)= \frac{1}{|G|} \sum_{g\in G} \chi_{p,q}(g). 
\]
In the same way the maps $\psi_i\colon G_i \to \Aut(C_i)$ induce representations 
$\varphi_i\colon G_i \rightarrow \GL\big(H^{1,0}(C_i) \big)$. Their   
characters $\chi_{\varphi_i}$ can be easily determined from the generating vectors $V_i$ in  
the algebraic datum of $X$, thanks to the formula of \emph{Chevalley-Weil}:
see \cite{ChevWeil} for the original version and \cite[Section 2]{FG16} for the relevant details in our situation. 
What remains to do is to determine the characters $\chi_{p,q}$ in terms of the characters $\chi_{\varphi_i}$.  
For this task we need the description of the mixed action from Proposition \ref{normalaction} and  
\emph{K\"unneth's formula for Dolbeault cohomology}: 

\bigskip

\begin{proposition}[{\cite[p.103-104]{GriffH}}]
There is an isomorphism 
\[
H^{p,q}(C_1 \times C_2 \times C_3)\simeq\bigoplus_{\substack {s_1+s_2+s_3=p \\ t_1 + t_2+t_3 =q}}
H^{s_1,t_1}(C_1) \otimes H^{s_2,t_2}(C_2) \otimes H^{s_3,t_3}(C_3),
\]
induced by the natural projections $p_i \colon  C_1 \times C_2 \times C_3 \to C_i$. 
\end{proposition}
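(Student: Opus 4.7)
The plan is to deduce the statement from the classical Künneth theorem for coherent sheaf cohomology, combined with the Dolbeault isomorphism $H^{p,q}(Y)\simeq H^q(Y,\Omega^p_Y)$ applied to the smooth projective variety $Y=C_1\times C_2\times C_3$ and to each of its factors. This reduces the whole problem to analysing $H^q(Y,\Omega^p_Y)$, and in particular to decomposing the coherent sheaf $\Omega^p_Y$ in a way adapted to the product structure.

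First I would decompose the holomorphic cotangent bundle of $Y$. The natural projections $p_i\colon Y\to C_i$ induce a canonical isomorphism
\[
\Omega^1_Y \simeq p_1^*\Omega^1_{C_1}\oplus p_2^*\Omega^1_{C_2}\oplus p_3^*\Omega^1_{C_3}.
\]
Taking $p$-th exterior powers and using the combinatorial formula $\Lambda^p(A\oplus B\oplus C)\simeq\bigoplus_{s_1+s_2+s_3=p}\Lambda^{s_1}A\otimes\Lambda^{s_2}B\otimes\Lambda^{s_3}C$ yields
\[
\Omega^p_Y\simeq\bigoplus_{s_1+s_2+s_3=p} p_1^*\Omega^{s_1}_{C_1}\otimes p_2^*\Omega^{s_2}_{C_2}\otimes p_3^*\Omega^{s_3}_{C_3}.
\]
Taking cohomology of $\Omega^p_Y$ then reduces to computing $H^q$ of each direct summand.

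Second, I would apply the Künneth formula for coherent sheaf cohomology iteratively. For any coherent sheaves $\mathcal F_i$ on $C_i$, the projections $p_i$ induce
\[
H^q\bigl(Y,\ p_1^*\mathcal F_1\otimes p_2^*\mathcal F_2\otimes p_3^*\mathcal F_3\bigr)\simeq\bigoplus_{t_1+t_2+t_3=q}H^{t_1}(C_1,\mathcal F_1)\otimes H^{t_2}(C_2,\mathcal F_2)\otimes H^{t_3}(C_3,\mathcal F_3).
\]
Substituting $\mathcal F_i=\Omega^{s_i}_{C_i}$, summing over $(s_1,s_2,s_3)$ with $\sum s_i=p$, and applying Dolbeault on each factor produces exactly the desired decomposition of $H^{p,q}(Y)$.

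The argument is formal once these two ingredients are in place; the main point to verify carefully, and the one that really matters for the later character-theoretic computation of the Hodge numbers, is that the constructed isomorphism is induced by the pull-backs $p_i^*$ and is therefore functorial with respect to automorphisms of $Y$. This naturality is precisely what allows the group-theoretic description in Proposition \ref{normalaction} to translate the representation $\phi_{p,q}$ into a sum of tensor products of the factor representations $\varphi_i$, and is what ultimately makes the character-level computation of $\chi_{p,q}$ tractable.
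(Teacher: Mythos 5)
Your argument is correct, but it is worth noting that the paper does not prove this proposition at all: it is quoted verbatim from Griffiths--Harris, where the K\"unneth decomposition for Dolbeault cohomology is obtained analytically, by choosing a product metric and observing that harmonic $(p,q)$-forms on the product are spanned by exterior products of harmonic forms pulled back from the factors. Your route is the algebraic one: the splitting $\Omega^1_Y\simeq\bigoplus_i p_i^*\Omega^1_{C_i}$, the exterior-power decomposition, the K\"unneth formula for coherent sheaf cohomology over a field (applied twice to handle three factors), and the Dolbeault isomorphism $H^{p,q}\simeq H^q(\Omega^p)$. Both proofs are standard and both yield the same isomorphism, namely $\omega_1\otimes\omega_2\otimes\omega_3\mapsto p_1^*\omega_1\wedge p_2^*\omega_2\wedge p_3^*\omega_3$, which is exactly the form the paper uses later when computing the characters $\chi_{p,q}$. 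Your closing remark is the right one to emphasize: what matters downstream is not the abstract existence of the isomorphism but its explicit description via the $p_i^*$, since an automorphism of $Y$ permuting the factors then permutes the K\"unneth summands (up to the Koszul sign coming from the degrees of the $\omega_i$), and this is precisely the mechanism exploited in the proof of Theorem \ref{valuesnotinG0}. The only point you should make explicit if you write this out in full is that the coherent K\"unneth isomorphism is indeed implemented by cup product of pullbacks, so that under the Dolbeault isomorphism it becomes the wedge of pullbacks; with that verified, the proof is complete.
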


\noindent
Formulas for the restrictions of the characters $\chi_{p,q}$ to the diagonal subgroup $G^0$ are easily derived using the fact that the character of a direct sum of representations is the sum of the characters and the character of a tensor product is equal to the product of the characters.

\begin{theorem}[{\cite[Theorem 3.7]{FG16}}]\label{KunnChar}
For the restrictions of the characters $\chi_{p,q}$ it holds:
\begin{itemize}
\item[i)] 
$\Res^{G}_{G^0}\big(\chi_{1,0}\big) = \chi_{\varphi_1} + \chi_{\varphi_2} + \chi_{\varphi_3}$, 
\item[ii)]
$\Res^{G}_{G^0}\big(\chi_{1,1}\big) = 2 \mathfrak Re (\chi_{\varphi_1} \overline{\chi_{\varphi_2}} + 
\chi_{\varphi_1} \overline{\chi_{\varphi_3}}
+ \chi_{\varphi_2} \overline{\chi_{\varphi_3}}) + 3\chi_{triv}$,
\item[iii)] 
$\Res^{G}_{G^0}\big(\chi_{2,0}\big) = \chi_{\varphi_1} \chi_{\varphi_2} + \chi_{\varphi_1}  \chi_{\varphi_3} + \chi_{\varphi_2}  
\chi_{\varphi_3}$,
\item[iv)] 
$\Res^{G}_{G^0}\big(\chi_{2,1}\big) = \overline{\chi_{\varphi_1}}  \chi_{\varphi_2}  \chi_{\varphi_3}  + 
\chi_{\varphi_1}  \overline{\chi_{\varphi_2}}  \chi_{\varphi_3}
+ \chi_{\varphi_1}  \chi_{\varphi_2}  \overline{\chi_{\varphi_3}} + 2(\chi_{\varphi_1} + \chi_{\varphi_2} + \chi_{\varphi_3})$,
\item[v)] 
$\Res^{G}_{G^0}\big(\chi_{3,0}\big) = \chi_{\varphi_1}  \chi_{\varphi_2}  \chi_{\varphi_3}$.
\end{itemize}
Here, $\chi_{triv}$ denotes the trivial character. 
\end{theorem}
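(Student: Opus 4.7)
My plan is to reduce the statement to the unmixed case. On the normal subgroup $G^0$, each element acts diagonally on $C_1 \times C_2 \times C_3$ by the formulas in Proposition \ref{normalaction}, so the restriction $\Res^G_{G^0}(\chi_{p,q})$ is exactly the character of the $G^0$-representation on $H^{p,q}(C_1 \times C_2 \times C_3)$ induced by the diagonal action. Hence the already-cited \cite[Theorem 3.7]{FG16} applies verbatim. To keep the presentation self-contained, I would nevertheless give the short derivation via Künneth.

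The ingredients are three. First, the Künneth decomposition from the preceding proposition writes $H^{p,q}(C_1 \times C_2 \times C_3)$ as a direct sum of tensor products $H^{s_1,t_1}(C_1) \otimes H^{s_2,t_2}(C_2) \otimes H^{s_3,t_3}(C_3)$ over triples with $\sum s_i = p$ and $\sum t_j = q$, and since each $C_i$ is a curve the indices are forced into $\{0,1\}$. Second, characters of representations add on direct sums and multiply on tensor products; the latter uses precisely the fact that on $G^0$ the action is diagonal, so each factor $H^{s_i,t_i}(C_i)$ is a $G^0$-subrepresentation. Third, the building blocks are $H^{0,0}(C_i)$ and $H^{1,1}(C_i)$, both one-dimensional and hence carrying the trivial character $\chi_{triv}$; $H^{1,0}(C_i)$, carrying $\chi_{\varphi_i}$; and $H^{0,1}(C_i) \cong \overline{H^{1,0}(C_i)}$, carrying the conjugate character $\overline{\chi_{\varphi_i}}$.

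With these in hand, each identity becomes a finite enumeration. For $\chi_{1,0}$ only the triples $(s_1,s_2,s_3)$ equal to $e_i$ with all $t_j = 0$ contribute, giving (i); for $\chi_{3,0}$ only $(1,1,1)$ with $t_j = 0$ contributes, giving (v); and $\chi_{2,0}$ comes from the three triples with two $s$'s equal to $1$, giving (iii). The formula for $\chi_{1,1}$ splits the nine admissible $(s,t)$-patterns into the three "diagonal" ones (where $s_i = t_i = 1$ for some $i$, contributing $\chi_{triv}$ each via $H^{1,1}(C_i)$) and the six "off-diagonal" ones with $s_i = t_j = 1$ for $i \neq j$, which group into the real-part expression $2\mathfrak{Re}(\chi_{\varphi_i}\overline{\chi_{\varphi_j}})$. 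Finally $\chi_{2,1}$ is obtained by running through the three choices of $s$-pattern times the three choices of $t$-position: the cases where the $t = 1$ lands on an index also carrying $s = 1$ contribute a factor $H^{1,1}(C_i)$ and produce the six single $\chi_{\varphi_i}$'s summing to $2(\chi_{\varphi_1} + \chi_{\varphi_2} + \chi_{\varphi_3})$, while the remaining three cases produce the three triple products with one bar.

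There is no substantive obstacle beyond bookkeeping; the one point to be careful about is the conjugate character on $H^{0,1}(C_i)$, which follows from the antiholomorphic identification $H^{0,1}(C_i) \cong \overline{H^{1,0}(C_i)}$ together with the fact that the character of the complex conjugate representation is the complex conjugate of the character.
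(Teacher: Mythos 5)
Your proposal is correct and follows the same route the paper takes: it quotes \cite[Theorem 3.7]{FG16} and indicates only that one combines the K\"unneth decomposition with the additivity of characters on direct sums and multiplicativity on tensor products, which is exactly the bookkeeping you carry out (including the correct identification of $H^{0,1}(C_i)$ with the conjugate representation carrying $\overline{\chi_{\varphi_i}}$). Your explicit enumeration of the summands in each case, and the observation that the diagonality of the $G^0$-action is what makes each K\"unneth factor a $G^0$-subrepresentation, fills in the details accurately.
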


\begin{rem}
Since the representations $\chi_{\varphi_i}$ are defined in terms of the actions $\psi_i$, they
are related to each other in the same way as these actions (see Proposition \ref{normalaction}) 
 e.g.  we have 
\[
\chi_{\varphi_2}(h)=\chi_{\varphi_1}(\tau h \tau^{-1}) \quad \quad \makebox{and} \quad \quad 
\chi_{\varphi_3}(k)=\chi_{\varphi_1}(\tau^2 k \tau^{-2})
\]
for all $h \in G_2$ and $k \in G_3$  in the index six case and similar formulas in the other cases.  
\end{rem}

\noindent 
It remains to determine the values of the characters 
$\chi_{p,q}$ for elements outside of $G^0$. Here we need a simple lemma from linear algebra:

\begin{lemma}\label{tensor_traces}
Let $A,B$ and $C$ be endomorphisms of a finite dimensional vector space $V$, then:   
\begin{itemize}
\item[i)]
the trace of the endomorphism of $V^{\otimes 2}$ given by 
$u \otimes v \mapsto  Av \otimes Bu$
is the trace of $A \circ B$,
\item[ii)]
the trace of the endomorphism of $V^{\otimes 3}$ given by 
$u \otimes v \otimes w \mapsto Av \otimes Bw \otimes  Cu$
is  the trace of $A \circ B \circ C$. 
\end{itemize}
\end{lemma}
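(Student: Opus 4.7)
The plan is to reduce both identities to a direct coordinate computation. Fix a basis $e_1,\ldots,e_n$ of $V$ and write the matrices of $A$, $B$, $C$ in this basis with entries $a_{ij}$, $b_{ij}$, $c_{ij}$. The pure tensors $e_{j_1}\otimes\cdots\otimes e_{j_k}$ form a basis of $V^{\otimes k}$, and the trace of an endomorphism $T$ of $V^{\otimes k}$ is the sum, over all such basis vectors, of the coefficient of $e_{j_1}\otimes\cdots\otimes e_{j_k}$ in the expansion of $T(e_{j_1}\otimes\cdots\otimes e_{j_k})$.

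For part (i), applying the given map to $e_j\otimes e_k$ yields $Ae_k\otimes Be_j$, whose coefficient at $e_j\otimes e_k$ equals $a_{jk}b_{kj}$. Summing over $j,k$ gives $\sum_{j,k}a_{jk}b_{kj}=\sum_j(AB)_{jj}=\tr(A\circ B)$. Part (ii) is the same bookkeeping one index deeper: the map sends $e_j\otimes e_k\otimes e_l$ to $Ae_k\otimes Be_l\otimes Ce_j$, whose diagonal coefficient is $a_{jk}b_{kl}c_{lj}$, and summing over $j,k,l$ recovers $\tr(A\circ B\circ C)$.

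A more conceptual reading, which one could take instead, is to observe that the two endomorphisms decompose as $(A\otimes B)\circ\sigma$ and $(A\otimes B\otimes C)\circ\rho$, where $\sigma$ is the swap on $V^{\otimes 2}$ and $\rho$ the cyclic permutation $u\otimes v\otimes w\mapsto w\otimes u\otimes v$ on $V^{\otimes 3}$. The lemma then follows from the classical identity that for any permutation $\pi\in\mathfrak S_n$ the trace of $(A_1\otimes\cdots\otimes A_n)\circ\pi$ factors over the cycles of $\pi$ as the corresponding products of traces of ordered matrix products; here $\sigma$ and $\rho$ are single cycles, yielding $\tr(AB)$ and $\tr(ABC)$ respectively. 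There is no genuine obstacle in either approach: the only point requiring care is the ordering of the factors in the cyclic case, but since the trace of a matrix product is invariant under cyclic permutation of the factors, every admissible ordering delivers the same stated answer.
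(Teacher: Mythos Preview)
Your proof is correct. The paper itself does not supply a proof of this lemma: it is introduced as ``a simple lemma from linear algebra'' and stated without argument, then invoked in the proof of the subsequent theorem. Your direct coordinate computation is exactly the kind of verification one would expect here, and the alternative conceptual remark via the cycle-decomposition identity for $\tr\big((A_1\otimes\cdots\otimes A_n)\circ\pi\big)$ is a nice addition that places the two parts in a common framework.
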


\begin{theorem}\label{valuesnotinG0}
The values of the characters $\chi_{p,q}$ for the elements outside of $G^0$ are displayed in the table below:

\bigskip

{\small
\begin{center}\label{chartable} 
\setlength{\tabcolsep}{1pt}
\begin{tabular}{c |c | c | c | c | c}
$$ & $(1,0)$ & $ \quad (1,1) \quad $ & $(2,0)$ & $(2,1)$  & $(3,0)$   \\
\hline
$$ & $$ & $$ & $$ & $$ & $$  \\
$\chi_{p,q}(\delta g)$ & $\chi_{\varphi_1}(\delta g)$ & $1$ & $-\chi_{\varphi_2}\big((\delta g)^2\big) $ & 
$-\overline{\chi_{\varphi_1}(\delta g)}\chi_{\varphi_2}\big((\delta g)^2\big)$ &  
$-\chi_{\varphi_1}(\delta g)\chi_{\varphi_2}\big((\delta g)^2\big)$ \\
$$ & $$ & $$ & $$ & $$ & $$  \\
$\chi_{p,q}(\tau g)$ & $0$ & $0$ & $0$ & 
$0$ &  $\chi_{\varphi_1}\big( (\tau g)^3 \big) $ \\
$$ & $$ & $$ & $$ & $$ & $$  \\
$\chi_{p,q}(\tau^2 g)$ & $0$ & $0$ & $0$ & 
$0$ &  $\chi_{\varphi_1}\big( (\tau^2 g)^3 \big)$ \\
$$ & $$ & $$ & $$ & $$ & $$  \\
$\chi_{p,q}(f)$ & $\chi_{\varphi_1}(f)$ & $1$ & $-\chi_{\varphi_2}(f^2) $ & 
$-\overline{\chi_{\varphi_1}(f)}\chi_{\varphi_2}(f^2)$ &  $-\chi_{\varphi_1}(f)\chi_{\varphi_2}(f^2)$ \\
$$ & $$ & $$ & $$ & $$ & $$ \\ 
\end{tabular}
\end{center}
}
\bigskip

\begin{itemize}
\item
the first row holds for all $\delta g \in \delta G^0$ in the index two case, 
\item
the second and third row holds  for all
$\tau g \in \tau G^0$ and  $\tau^2g \in \tau^2  G^0$ in the index three as well as the index six case and 
\item
the last row holds for all 
$f \in G_1\setminus G^0$ in the index six case. 
\end{itemize}
\end{theorem}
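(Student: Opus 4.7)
The plan is to compute $\chi_{p,q}(\sigma) = \tr\bigl((\sigma^{-1})^*\bigr)$ via the K\"unneth decomposition
\[
H^{p,q}(C_1 \times C_2 \times C_3) \;=\; \bigoplus_{\substack{s_1+s_2+s_3=p\\ t_1+t_2+t_3=q}} H^{s_1,t_1}(C_1) \otimes H^{s_2,t_2}(C_2) \otimes H^{s_3,t_3}(C_3),
\]
tracking how $\sigma \in G \setminus G^0$ acts on the summands. Since $\sigma$ permutes the factors of the product via $\nu(\sigma) \in \mathfrak S_3$, the operator $(\sigma^{-1})^*$ permutes the K\"unneth summands by the same permutation; hence only summands whose multi-indices are invariant under $\nu(\sigma)$ contribute to the trace. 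On each such summand one reads off the induced operator from Proposition \ref{normalaction} and evaluates its trace via Lemma \ref{tensor_traces}.

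In the 3-cycle case $\sigma \in \tau G^0 \cup \tau^2 G^0$, invariance forces $(s_1,t_1) = (s_2,t_2) = (s_3,t_3)$. Among the bidegrees in the table this happens non-trivially only for $(p,q) = (3,0)$, with the unique invariant summand $H^{1,0}(C_1) \otimes H^{1,0}(C_2) \otimes H^{1,0}(C_3)$; the other four characters therefore vanish. Unravelling $((\tau g)^{-1})^*$ via the projections $p_i$ and using Proposition \ref{normalaction}(ii)--(iii) gives
\[
\omega_1 \otimes \omega_2 \otimes \omega_3 \;\longmapsto\; (A^{-1})^*\omega_2 \otimes (B^{-1})^*\omega_3 \otimes (C^{-1})^*\omega_1,
\]
with $A = \psi_1(\tau g \tau^{-1})$, $B = \psi_1(\tau^2 g \tau^{-2})$, $C = \psi_1(\tau^3 g)$; the sign from cyclically reordering three odd-degree wedge factors is $+1$. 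Lemma \ref{tensor_traces}(ii) yields the trace $\tr\bigl((A^{-1})^* \circ (B^{-1})^* \circ (C^{-1})^*\bigr)$. Iterating $\tau g(x,y,z) = (Ay, Bz, Cx)$ three times produces $(\tau g)^3(x,y,z) = (ABCx, BCAy, CABz)$, so $\psi_1((\tau g)^3) = ABC$, and the trace equals $\chi_{\varphi_1}((\tau g)^3)$. The calculation for $\tau^2 g$ is identical.

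In the transposition case $\sigma \in \delta G^0$ (index two) or $\sigma \in G_1 \setminus G^0$ (index six), invariance forces $(s_2,t_2) = (s_3,t_3)$. A case-by-case enumeration for each relevant bidegree produces a unique invariant summand: $H^{1,0}(C_1) \otimes \mathbb C \otimes \mathbb C$ for $(1,0)$, $H^{1,1}(C_1) \otimes \mathbb C \otimes \mathbb C$ for $(1,1)$, and $H^{s_1,t_1}(C_1) \otimes H^{1,0}(C_2) \otimes H^{1,0}(C_3)$ for the three remaining bidegrees. The induced operator factors as a tensor product $T_1 \otimes S$, where $T_1 = (A^{-1})^*$ on $H^{s_1,t_1}(C_1)$ and
\[
S\colon \omega_2 \otimes \omega_3 \;\longmapsto\; (-1)^{d_2 d_3}\,(C^{-1})^*\omega_3 \otimes (B^{-1})^*\omega_2, \qquad d_i = s_i+t_i;
\]
the sign equals $-1$ exactly in the last three bidegrees. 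Lemma \ref{tensor_traces}(i) together with cyclicity of the trace gives $\tr(S) = (-1)^{d_2 d_3}\chi_{\varphi_2}((\delta g)^2)$, using $CB = \psi_2((\delta g)^2)$ read off from $(\delta g)^2(x,y,z) = (A^2 x, CB y, BC z)$; the analogous calculation in the index six case produces $CB = \psi_2(f^2)$. The trace of $T_1$ is $1$, $1$, $\chi_{\varphi_1}(\sigma)$, $\overline{\chi_{\varphi_1}(\sigma)}$ or $\chi_{\varphi_1}(\sigma)$ in the respective bidegrees, by triviality of $\Aut(C_1)$ on the one-dimensional space $H^{1,1}(C_1)$ and Hodge duality on $H^{0,1}(C_1)$. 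Multiplying yields the table entries.

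The main technical subtleties lie in the bookkeeping of signs arising from commuting odd-degree wedge factors, and in identifying $(B^{-1})^*$ and $(C^{-1})^*$ as endomorphisms of a single space $H^{1,0}(C_2) \cong H^{1,0}(C_3)$ (via the convention $C_2 \cong C_3$, respectively $C_1 \cong C_2 \cong C_3$), so that the composition $CB$ can be recognised as the pull-back of $\psi_2((\delta g)^2)$ on this common vector space.
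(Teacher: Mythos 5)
Your argument is correct and follows the paper's proof essentially verbatim: decompose via K\"unneth, observe that $(\sigma^{-1})^{\ast}$ permutes the summands according to $\nu(\sigma)$ so that only the invariant summands contribute to the trace, and evaluate the trace on the unique surviving summand with Lemma \ref{tensor_traces}, using the same identifications $\psi_1\big((\tau g)^3\big)=ABC$ and $\psi_2\big((\delta g)^2\big)=CB$ coming from the normal form of Proposition \ref{normalaction}. The only blemish is that your list of the traces of $T_1$ pairs the values with the bidegrees in the wrong order (for $(1,0)$ the first factor is $H^{1,0}(C_1)$, so $\operatorname{tr}T_1=\chi_{\varphi_1}(\sigma)$, while the two $1$'s belong to $(1,1)$ and $(2,0)$); since the products you report agree with the table, this is a typo rather than a gap.
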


\bigskip

\begin{rem}
Note that the table above gives the values of the characters $\chi_{p,q}$ for all elements in $G\setminus G^0$. 
In the index two and index three case this is clear. In the index six case it follows from
the identities 
\[
G_1\setminus G^0 =\tau (G_2 \setminus G^0) \tau^{-1}=\tau^2 (G_3 \setminus G^0) \tau^{-2}
\]
and the fact that a character is constant under conjugation. 
\end{rem}

\begin{proof}[proof of Theorem \ref{valuesnotinG0}]
Under the natural homomorphism $\nu\colon G \to G/G^0 \leq \mathfrak S_3$ an  
element in 
$G \setminus G^0$ maps to a three cycle or to a transposition. 
For this reason we will prove the theorem just in two cases: 
\[
\makebox{a) for  $\tau g \in \tau G^0 \quad $  i.e.  $ \quad \nu(\tau g)=(1,3,2) \quad $ and \quad b)
for $ \quad \delta g \in \delta G^0 \quad $ i.e.  $ \quad \nu(\delta g)=(2,3)$.}
\]
For the elements contained in $\tau^2 G^0$ the computation is identical to $a)$ and for
$f \in G_1 \setminus G^0$ it is identical to $b)$. 

\noindent
$a)$ The inverse of an element $\tau g \in \tau G^0$ acts on $C^3$ via 
\[
(\tau g)^{-1}(x,y,z)=\big( \psi_1( g^{-1} \tau^{-3})z,\psi_1(\tau g^{-1} \tau^{-1})x,\psi_1(\tau^2 g^{-1} \tau^{-2}) y \big).
\]
Let $\omega=\omega_1 \otimes \omega_2 \otimes \omega_3$ be a pure tensor in $
H^{s_1,t_1}(C) \otimes H^{s_2,t_2}(C) \otimes H^{s_3,t_3}(C)$, 
where 
\[
s_1 + s_2 +s_3 =p \quad \quad \makebox{and} \quad \quad  t_1 + t_2 +t_3 =q. 
\]
Under K\"unneth's isomorphism $\omega$ is mapped to 
$p_1^{\ast}\omega_1 \wedge p_2^{\ast}\omega_2 \wedge p_3^{\ast}\omega_3$. The
pullback of this element via $(\tau g)^{-1}$ is:
\[
\pm ~  p_1^{\ast} \psi_1(\tau g^{-1} \tau^{-1})^{\ast} \omega_2 \wedge
p_2^{\ast} \psi_1(\tau^2 g^{-1} \tau^{-2})^{\ast} \omega_3 \wedge 
p_3^{\ast} \psi_1(g^{-1} \tau^{-3})^{\ast} \omega_1,
\]
where the sign depends on the degrees of the classes $\omega_i$. 
The corresponding tensor 
\[
\pm ~ \psi_1(\tau g^{-1} \tau^{-1})^{\ast}\omega_2 \otimes 
\psi_1(\tau^2 g^{-1} \tau^{-2})^{\ast}\omega_3 \otimes  
\psi_1(g^{-1} \tau^{-3})^{\ast}\omega_1 
\]
is an element in 
\[
H^{s_2,t_2}(C) \otimes H^{s_3,t_3}(C) \otimes H^{s_1,t_1}(C).
\]
We conclude that  $\omega$ and $\big((\tau g)^{-1}\big)^{\ast} \omega$ are contained in 
different direct summands for all 
pairs 
\[
(p,q) \in \big\lbrace (1,0), (1,1), (2,0), (2,1) \big\rbrace.
\]
This implies that the traces of the linear maps 
\[
\big((\tau g)^{-1}\big)^{\ast}\colon H^{p,q}(C^3) \to H^{p,q}(C^3)
\]
are equal to zero for these pairs. In other words $\chi_{p,q}(\tau g)=0$.  
In the case $(p,q)=(3,0)$ the forms $\omega_i$ are all of type $(1,0)$. Therefore, the sign in the formula 
for the pullback of $\omega$ is $+1$ and there is only one summand in the decomposition of $H^{3,0}(C^3)$. 
It holds  
\[
\big((\tau g)^{-1}\big)^{\ast} \omega  = \varphi_1(\tau g\tau^{-1}) \omega_2 \otimes 
\varphi_1(\tau^2 g \tau^{-2})^{\ast}\omega_3 \otimes  
\varphi_1(\tau^{3}g)^{\ast}\omega_1.
\]
We apply Lemma \ref{tensor_traces} \emph{ii)} setting 
$A:= \varphi_1(\tau g \tau^{-1})$, $B:=\varphi_1(\tau^2 g \tau^{-2})$ and
$C:=\varphi_1(\tau^{3} g)$ and obtain
\[ 
\chi_{3,0}(\tau g) = \tr(ABC)=\tr\big(\varphi_1(\tau g))^3\big)=\chi_{\varphi_1}\big( (\tau g)^3 \big).
\]
$b)$ take an element $\delta g \in \delta G^0$ and a pure tensor 
$\omega=\omega_1 \otimes \omega_2 \otimes \omega_3$ 
in 
\[
H^{s_1,t_1}(D) \otimes H^{s_2,t_2}(C) \otimes H^{s_3,t_3}(C) \subset H^{p,q}(D \times C^2). 
\]
The pullback of $\omega$ via $(\delta g)^{-1}$ is
\begin{eqnarray*}
\big((\delta g)^{-1}\big)^{\ast} \omega
&=& \pm ~ \psi_1(g^{-1} \delta^{-1})^{\ast}\omega_1 \otimes 
\psi_2(\delta g^{-1} \delta^{-1})^{\ast}\omega_3 \otimes \psi_2(g^{-1} \delta^{-2})^{\ast}\omega_2. 
\end{eqnarray*}	
This is a tensor in $H^{s_1,t_1}(D) \otimes H^{s_3,t_3}(C) \otimes H^{s_2,t_2}(C)$.
For all pairs $(p,q)$, there is exactly one direct summand of $H^{p,q}(D \times C^2)$ containing 
both $\omega$ and  $\big((\delta g)^{-1}\big)^{\ast} \omega$. This implies that 
the trace of $\big((\delta g)^{-1}\big)^{\ast}$ is equal to the trace of 
the restriction of  $\big((\delta g)^{-1}\big)^{\ast}$ to this 
summand. Using Lemma \ref{tensor_traces} \emph{i)} in the same way as above, we get

\bigskip

{\small
\begin{center}
\begin{tabular}{c|c|c}
$(p,q)$ & invariant summand & $\chi_{p,q}(\delta g)$ \\
\hline
& & \\
$(1,0)$ & $H^{1,0}(D) \otimes H^{0,0}(C) \otimes H^{0,0}(C)$ & $\chi_{\varphi_1}(\delta g)$ \\
$$ & $$ & \\
$(1,1)$ & $H^{1,1}(D) \otimes H^{0,0}(C) \otimes H^{0,0}(C)$ & $1$ \\
$$ & $$ & \\
$(2,0)$ & $H^{0,0}(D) \otimes H^{1,0}(C) \otimes H^{1,0}(C)$ & $-\chi_{\varphi_2}\big((\delta g)^2\big)$ \\
$$ & $$  & \\
$(2,1)$ & $H^{0,1}(D) \otimes H^{1,0}(C) \otimes H^{1,0}(C)$ & $-\overline{\chi_{\varphi_1}(\delta g)}
\chi_{\varphi_2}\big((\delta g)^2\big)$\\
$$ & $$ & \\
$(3,0)$ & $H^{1,0}(D) \otimes H^{1,0}(C) \otimes H^{1,0}(C)$ & $-\chi_{\varphi_1}(\delta g)
\chi_{\varphi_2}\big((\delta g)^2\big)$\\
\end{tabular}
\end{center}
}
\end{proof}

\section{Combinatorics, Bounds and Algorithms}\label{bounds_smooth}

Given a threefold isogenous to a product $X=(C_1 \times C_2 \times C_3)/G$, we consider the following numerical information:
\begin{itemize}
\item
the group order $n:=|G|$,
\item
the orders $k_i:=|K_i|$ of the kernels of the maps  $\psi_i \colon G_i \to \Aut(C_i)$ and
\item
the types $T_i=[g_i';m_{i,1}, \ldots ,m_{i,r_i}]$ (see Section \ref{group_descr})  of the corresponding Galois covers 
\[
C_i \rightarrow C_i/\overline{G_i}, \quad \quad \makebox{where} \quad \quad \overline{G_i}:=G_i/K_i.
\]  
\end{itemize}

\noindent
Note that the collection above determines the genera $g_i:=g(C_i)$ via Hurwitz' formula  
\[
g_i= \frac{|G_i|}{2 k_i} \bigg(2g_i'-2+\sum_{j=1}^{r_i} \frac{m_{i,j}-1}{m_{i,j}} \bigg) + 1,
\]
and therefore also the invariants $\chi(\mathcal O_X)$, $e(X)$ and $K_X^3$ of the threefold $X$  
(see Section \ref{generalities}). In analogy to the definition of an algebraic datum 
(see Definition \ref{algebraicdata}) we define: 

\begin{definition}
The \emph{numerical datum} of a threefold $X$ isogenous to a product is the tuple 

\begin{itemize}
\item
$\mathcal D:=(n,k_1,k_2,T_1,T_2)$ in the index two case and
\item
$\mathcal D:=(n,k_1,T_1)$
in the index three and index six case.
\end{itemize}
\end{definition}

\noindent 
If the action is absolutely faithful
$k_i=1$ for all $1 \leq i \leq 3$. Here, as a convention, we omit writing the $k_i's$. 
Clearly, an algebraic datum $\mathcal A$ of $X$ 
determines the numerical datum $\mathcal D$ of $X$ and we say that the numerical datum 
$\mathcal D$ is realized by the algebraic datum $\mathcal A$. 
We point out that $k_2=k_3$ and $T_2 =T_3$ in the index two case, 
whereas $k_1=k_2=k_3$ and $T_1=T_2=T_3$ in the index three and six case.

\noindent 
In this section we derive \emph{combinatorial constraints} on the numerical data.  
These constraints will imply that 
there are only finitely many possibilities for the numerical data, once the value of   
$\chi(\mathcal{O}_X)$ is fixed. Consequently there can be only finitely many algebraic data
realizing these numerical data. This fact can be turned into an algorithm searching systematically
through all possibilities and thereby classifying all threefolds isogenous to a product with a fixed value of 
$\chi(\mathcal{O}_X)$.

\begin{definition}
We define the function 
$N_{\max}\colon \mathbb N_{\geq 2} \to \mathbb N$,
where 
\[
N_{\max}(g):=\max\big\lbrace|\Aut(C)| ~ \big\vert ~ \makebox{$C$ is a compact Riemann surface with $g(C)=g$}\big\rbrace.
\]
\end{definition}

\noindent
According to Hurwitz' classical result $N_{\max}(g)$ is bounded by $84(g-1)$. However, for many values of $g$, the quantity $N_{\max}(g)$ is actually much smaller. 
Conder's paper \cite{Conder} contains a table that displays all $N_{\max}(g)$ in the range $2 \leq g \leq 301$. 
It is the most comprehensive reference that we found and it will be very useful for our computations.

\begin{proposition}\label{boundgroupord}
Let $X= \big(C_1 \times C_2 \times C_3 \big)/G$ be a threefold isogenous to a product of mixed type 
with numerical datum $\mathcal D$. Then
\[
n \leq \bigg\lfloor \sqrt{- d \cdot \chi(\mathcal O_X) \prod_{i=1}^3\frac{k_i}{\Theta_{min}(T_i)}} \bigg\rfloor
\quad \makebox{in the general case and} \quad  
n \leq \lfloor 42 \sqrt{-d \cdot 42 \chi(\mathcal O_X)} 
\]
if the action is absolutely faithful. Here 
\[
\Theta_{min}(T_i):=\begin{cases}
  1/42,  & \text{if} \quad g_i'=0\\
  1/2, & \text{if} \quad g_i'=1\\
	2g_i'-2, & \text{if}  \quad g_i' \geq 2\\ 
\end{cases}, 
\quad \quad \makebox{for} \quad \quad 
	T_i=[g_i'; m_{i,1},\ldots, m_{i,r_i}]
\]
and the parameter $d$  is defined as $32$ in the index two case and as $216$ in the index three and index six case.
\end{proposition}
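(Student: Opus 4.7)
The plan is to combine the Chern--Euler identity $\chi(\mathcal O_X) = -\frac{1}{n}\prod_{i=1}^3(g_i-1)$ from Section \ref{generalities} with Hurwitz's formula applied to the faithful quotient actions $\overline{\psi_i}\colon G_i/K_i \to \Aut(C_i)$, and to extract uniform index data $[G:G_i]$ from the normal-form description of Section \ref{GactProdCurves}.

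First I would write the faithful Galois cover $C_i \to C_i/\overline{G_i}$, where $\overline{G_i}=G_i/K_i$, so that Hurwitz applied to a cover of type $T_i$ gives
\[
g_i - 1 = \frac{|G_i|}{2k_i}\,\Theta(T_i), \qquad \Theta(T_i) := 2g_i' - 2 + \sum_{j=1}^{r_i}\frac{m_{i,j}-1}{m_{i,j}}.
\]
Next I would pin down $|G_i|$ in each of the three cases using the embedding $G/G^0\hookrightarrow \mathfrak S_3$. In the index two case the non-trivial coset of $G/G^0$ is represented by an element whose image is the transposition $(2,3)$, which fixes position $1$; hence $G_1=G$ and $G_2=G_3=G^0$, giving $|G_1|=n$ and $|G_2|=|G_3|=n/2$. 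In the index three case every non-identity element of $G/G^0\simeq \mathfrak A_3$ is a $3$-cycle moving all positions, so $G_i=G^0$ and $|G_i|=n/3$. In the index six case the stabiliser of $i$ in $\mathfrak S_3$ has order two, so $[G:G_i]=3$ and again $|G_i|=n/3$.

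Plugging these orders into $-n\chi(\mathcal O_X)=\prod_{i=1}^3(g_i-1)$ then yields the uniform identity
\[
n^2 = \frac{-d\,\chi(\mathcal O_X)\prod_{i=1}^3 k_i}{\prod_{i=1}^3 \Theta(T_i)},
\]
with $d=32$ in the index two case (from $n\cdot(n/2)^2\cdot 2^{-3}$) and $d=216$ in the index three and six cases (from $(n/3)^3\cdot 2^{-3}$). To convert this into the stated inequality I would verify $\Theta(T_i)\geq \Theta_{min}(T_i)$: when $g_i'\geq 2$ the branch-point contribution is non-negative and can be dropped; when $g_i'=1$ the condition $g_i\geq 2$ forces $r_i\geq 1$ and hence $\Theta(T_i)\geq 1/2$; when $g_i'=0$ the classical minimum positive value $1/42$ (realised by the signature $[0;2,3,7]$) applies. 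Replacing $\Theta(T_i)$ by $\Theta_{min}(T_i)$ in the denominator and taking an integer part gives the general bound, and in the absolutely faithful case $k_i=1$ the universal estimate $\Theta_{min}(T_i)\geq 1/42$ upgrades this to $n\leq\lfloor 42\sqrt{-42\, d\,\chi(\mathcal O_X)}\rfloor$.

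The only step that is not purely mechanical is the case analysis for $[G:G_i]$; I expect it to be the main technical point, since it hinges on correctly unpacking the embedding $G/G^0\hookrightarrow \mathfrak S_3$ and observing which coset representatives preserve the $i$-th factor. Once those indices are fixed, everything reduces to an algebraic rearrangement and the elementary monotonicity $\Theta(T_i)\geq\Theta_{min}(T_i)$.
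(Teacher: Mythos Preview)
Your argument is correct and is precisely the computation the paper has in mind: the paper does not spell out a proof but simply refers to \cite[Proposition 4.4 and Corollary 4.5]{FG16} for the unmixed analogue (where $d=8$), and your derivation is exactly that argument with the appropriate indices $[G:G_i]$ inserted. The index computations, the Hurwitz identity, the product formula yielding $n^2 = -d\,\chi(\mathcal O_X)\prod_i k_i/\Theta(T_i)$, and the lower bounds $\Theta(T_i)\geq\Theta_{min}(T_i)$ are all in order.
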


\noindent 
For a proof we refer to \cite[Proposition 4.4 and Corollary 4.5]{FG16}, which are 
the analogous statements in the unmixed case; indeed setting $d=8$, the formulas in 
the proposition become the bounds in the unmixed case. 

\noindent
Note that in the absolutely faithful case, the bound for $n=|G|$ is solely in terms of $\chi(\mathcal O_X)$. 
Such a result can also be derived in the general case:
let $X= \big(C_1 \times C_2 \times C_3 \big)/G$ be a threefold  isogenous to a product of mixed type, then $X$ is covered by an unmixed threefold 
\[
X^0:= \big(C_1 \times C_2 \times C_3 \big)/G^0 \quad \quad \makebox{and it holds} \quad \quad 
\chi(\mathcal O_{X^0})= \big\vert G/G^0 \big\vert \chi(\mathcal O_{X}).
\]	
According to \cite[Proposition 4.6]{FG16}:
\[
|G^0| \leq 84^6 \chi(\mathcal O_{X^0})^2 \quad \quad \makebox{which yields the bound} \quad \quad 
n=|G| \leq 84^6 \big\vert G/G^0 \big\vert^3 \chi(\mathcal O_X)^2. 
\]
Unfortunately, even in the simplest case, when the holomorphic Euler-Poincar\'e-characteristic is $-1$ this 
bound is too large to be useful from the computational point of view.  
It would be interesting to understand if there exists a significantly better bound for $n=|G|$ in terms of 
$\chi(\mathcal{O}_X)$.

\begin{proposition}\label{bounds1}
Let $X$ be a  threefold isogenous to a product,
with algebraic datum $\mathcal D$. 
Then
\begin{itemize}\setlength{\itemsep}{.3\baselineskip}
\item[i)] $k_i ~ \big\vert  ~ \big( g_{[i+1]} - 1 \big) \big(g_{[i+2]} - 1 \big)$,
\item[ii)] $m_{i,j} ~ \big\vert ~  \big( g_{[i+1]} - 1 \big) \big(g_{[i+2]} - 1 \big)$,
\item[iii)]  $\displaystyle{\big( g_i- 1 \big) ~ \big\vert ~  \chi(\mathcal O_X) \frac{n}{k_i}}$,
\item[iv)] $r_i \leq \dfrac{4d_i k_i\big( g_i - 1 \big)}{n} - 4g_i' + 4$,
\item[v)] $ m_{i,j} \leq  4g_i + 2$,
\item[vi)]
$\displaystyle{ g_i' \leq 1  - \frac{d_i k_i  
\chi(\mathcal O_X)}{\big( g_{[i+1]} - 1 \big) \big( g_{[i+2]}- 1 \big)} \leq 1  - d_i  \chi(\mathcal O_X)}$. 
\item[vii)]
$n/(k_i d_i) \leq N_{\max}(g_i)$ 
\end{itemize}
Here, $[\,\cdot \,]$ denotes the residue $\mathrm{mod }\ 3$ and 
\begin{itemize}
\item
$d_1=1$ and $d_2=d_3=2$ in the index two case, 
\item
$d_i=3$ for all $i$  in the index three and index six case. 
\end{itemize}
\end{proposition}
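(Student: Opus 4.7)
The plan is to derive all seven inequalities from three basic facts: Hurwitz's formula applied to the effective covers $C_i \to C_i/\overline{G_i}$, the identity $\chi(\mathcal O_X)=-\prod(g_j-1)/n$, and the observation that certain subgroups of $G$ must act freely on a two-dimensional sub-product, forcing integrality of its holomorphic Euler characteristic.

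First I set up notation: since $G_i$ has index $d_i$ in $G$ (one checks this directly from the embedding $G/G^0 \hookrightarrow \mathfrak S_3$ and the definition of $G_i$), the effective quotient group $\overline{G_i}=G_i/K_i$ has order $n/(d_ik_i)$, and the cover $C_i\to C_i/\overline{G_i}$ has branching data $T_i$. Hurwitz's formula then reads
\[
\frac{2d_ik_i(g_i-1)}{n}=2g_i'-2+\sum_{j=1}^{r_i}\left(1-\frac{1}{m_{i,j}}\right).
\]
Item (iv) follows immediately by bounding each term $(1-1/m_{i,j})\geq 1/2$, and item (vi) follows by dropping the non-negative sum and rewriting $(g_i-1)/n$ via $\chi(\mathcal O_X)=-\prod(g_j-1)/n$; the second inequality in (vi) uses $\chi(\mathcal O_X)<0$ together with (i) to replace $k_i$ by $(g_{[i+1]}-1)(g_{[i+2]}-1)$. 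Item (v) is Wiman's classical bound $\ord(h_{i,j})\leq 4g_i+2$ applied to the automorphism $h_{i,j}\in\overline{G_i}\leq\Aut(C_i)$, and item (vii) is simply the defining property of $N_{\max}(g_i)$, since $\overline{G_i}$ acts faithfully on $C_i$.

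For (i) and (ii), which I expect to be the most delicate step, the key observation is the following freeness lemma: if $H\leq G_i$ has the property that every element of $H/K_i$ fixes a common point $p\in C_i$ (equivalently, $H$ is contained in the stabilizer of $p$ in $G_i$), then $H$ acts on $\{p\}\times C_{[i+1]}\times C_{[i+2]}\simeq C_{[i+1]}\times C_{[i+2]}$, and this action must be free because the ambient $G$-action on the product is free. Consequently $H$ acts freely on a surface of holomorphic Euler characteristic $(g_{[i+1]}-1)(g_{[i+2]}-1)$, so $|H|$ divides this product. Applying this to $H=K_i$ (with trivial image in $\overline{G_i}$, hence fixing every point of $C_i$) yields (i); applying it to the full preimage of the cyclic stabilizer $\langle h_{i,j}\rangle$ at a branch point yields $m_{i,j}k_i\mid (g_{[i+1]}-1)(g_{[i+2]}-1)$, which gives (ii) (and reproves (i)). In the mixed case one must be slightly careful: $K_i$ may contain elements swapping the two remaining factors, but this does not affect the freeness argument since the action on the two-dimensional sub-product is still well-defined and free.

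Finally, (iii) follows directly by rewriting $\chi(\mathcal O_X)\cdot n/k_i = -(g_i-1)\prod_{j\neq i}(g_j-1)/k_i$ and noting that the remaining factor $\prod_{j\neq i}(g_j-1)/k_i$ is an integer by (i). The main obstacle is the freeness lemma underlying (i) and (ii); once that is established, every other item reduces to a direct manipulation of Hurwitz's formula and the Euler-characteristic formula.
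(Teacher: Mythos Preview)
Your argument is correct and is exactly the intended one: the paper does not prove this proposition at all but simply refers to \cite[Proposition~4.8]{FG16} for the unmixed analogue, remarking that the mixed version follows by inserting the index factors $d_i=[G:G_i]$. Your proof is precisely that unmixed argument adapted to the present setting, including the key freeness observation that any $H\leq G_i$ fixing a point of $C_i$ acts (possibly non-diagonally, but still freely) on the remaining two-factor product, forcing $|H|\mid (g_{[i+1]}-1)(g_{[i+2]}-1)$.
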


\noindent 
Also here we obtain the analogous constraints from unmixed case setting $d_i=1$ for all $i$ 
(cf. \cite[Proposition 4.8]{FG16}). 
As an immediate consequence of Proposition \ref{boundgroupord} and Proposition \ref{bounds1} we conclude that  
 there are only finitely many 
algebraic data of threefolds $X$ isogenous to a product with 
\[
\mathbb \chi(\mathcal O_X) \leq -1.
\]
For completeness, we want to state the following useful, but trivial Remark: 

\begin{remark}\label{roots}
\makebox{$$}
\begin{itemize}
\item[a)]
In the index two case $\displaystyle{g_2=\sqrt{\frac{-n \cdot \chi(\mathcal O_X)}{g_1-1}} + 1}$.
\item[b)]
In the index three and index six case $g_1=\sqrt[3]{-n \cdot \chi(\mathcal O_X)} + 1$.
\end{itemize}
\end{remark}

\noindent
The combinatorial constraints that we found enable us to give an algorithm to 
classify threefolds isogenous to 
a product with a fixed value of $\chi(\mathcal O_X)$. 
Since the bound for the group order 
is very large in the general case, a complete classification, even 
with the help of a computer and just for small values of $\chi(\mathcal O_X)$, seems to be out of reach.
On the other hand, if the group action is assumed to be absolutely faithful, then the bound 
drops significantly and a full classification, at least for $\chi(\mathcal O_X)=-1$, 
is possible. 
For this reason, we restrict ourselves to the absolutely faithful case.  
The exact strategy that we follow in our algorithm differs slightly 
according to the index of $G^0$ in $G$. 
Our MAGMA implementation is based on the code given in \cite[Appendix]{BCGP12}. 
We point out that the program relies heavily on \emph{MAGMA's Database of Small Groups} (see \cite{magma}), which contains:
\begin{itemize}
\item  all groups of order up to 2000, excluding the groups of order 1024,
\item  the groups whose order is a product of at most 3 primes,
\item  the groups of order dividing $p^6$ for $p$  prime, 
\item  the groups of order $p^k q$, where $p^k$ is a prime-power dividing 
$2^8$, $3^6$, $5^5$ or $7^4$ and $q$ is a prime different from $p$.
\end{itemize}

\noindent 
Since the full code is very long, we just explain the strategy. 
\footnote{The interested reader can find the full code at
 \url{http://www.staff.uni-bayreuth.de/~ bt300503/.}}

\bigskip

\noindent
{\bf Input:} 
A value $\chi$ for the holomorphic {Euler-Poincar\'e-characteristic}.

\bigskip
\noindent
{\bf Part 1:} 
In the first part we determine the set of \emph{admissible numerical data}. This is the finite set 
of tuples of the form 
\begin{itemize}
\item 
$(n,T_1,T_2)$ in the index two case and 
\item
$(n,T_1)$ in the index three and index six case,
\end{itemize}
such that the combinatorial constraints form Proposition \ref{bounds1} and Remark \ref{roots}, 
the inequality from Proposition 
\ref{boundgroupord} and Hurwitz' formula are satisfied. 

\noindent
Note that the set of numerical data of threefolds isogenous to a product with 
$\chi(\mathcal O_X)=\chi$ is 
a subset of the set of admissible numerical data. 

\noindent
In our implementation, this computation is performed by the functions
{\tt AdNDindexTwo}, {\tt AdNDindexThree} and {\tt AdNDindexSix} in the respective cases. 
The functions just return the set of admissible numerical data such that the groups of order $n$ in the unmixed case, 
$n/2$ in the index two case and $n/3$ in the index three and index six case are contained in 
the Database of Small Groups. The exceptions are stored in the files 
{\tt ExcepIndexTwo$\chi$.txt},  {\tt ExcepIndexThree$\chi$.txt} and {\tt ExcepIndexSix$\chi$.txt}. 

\bigskip
\noindent
{\bf Part 2:} 
In the second part of the algorithm, we search for algebraic data.

\bigskip
\noindent
\underline{{\bf Index two case:}}

\bigskip
\noindent
{\bf Step 1:} 
Starting from the triples $(n,T_1,T_2)$ contained in the set {\tt AdNDindexTwo($\chi$)}, 
compute the set of $4$-tuples $(n,T_1,T_2,H)$, where $H$ is a group of order $n/2$ admitting at least one 
generating vector of type $T_2$. 

\noindent
In our implementation, this computation is performed by the function
{\tt NDHIndexTwo}. The set of $4$-tuples $(n,T_1,T_2,H)$ such that the groups of 
order $n$ are contained in the Database of Small Groups is returned.
The remaining tuples are stored in the file {\tt ExcepIndexTwo$\chi$.txt}. 

\noindent
{\bf Step 2:} 
For each integer $n$  belonging  to some $4$-tuple in the set {\tt NDHIndexTwo($\chi$)}
consider the groups of order $n$.
For each group $G$ of order $n$ construct  the list of subgroups of index two. 
For each $G^0$ in this list consider the $4$-tuples $(n,T_1,T_2,H)$ from Step 1 such that 
$H \simeq G^0$. For each of this $4$-tuples 
compute the set of generating vectors $V_1$ for $G$ of type $T_1$ and the set of generating vectors 
$V_2$ for $G^0$ of type $T_2$. Check the freeness conditions $i)$ and $ii)$ of Proposition \ref{freenesscond} $b)$.
 If they are fulfilled, then there exists a threefold $X$ isogenous to a product
with algebraic datum $(G,G^0,V_1,V_2)$ and $\chi(\mathcal O_X)=\chi$ (see Proposition \ref{converse}). 
Compute the Hodge diamond of $X$  
and save the occurrence 
\[
[G,T_1,T_2, h^{3,0}, h^{2,0}, h^{1,0}, h^{1,1}, h^{1,2}]
\]
in the file {\tt IndexTwo$\chi$.txt}. Step 2 is performed calling {\tt ClassifyIndexTwo($\chi$)}. 

\bigskip
\noindent
\underline{{\bf Index three case:}} 

\bigskip
\noindent
{\bf Step 1:} 
Starting from the pairs $(n,T_1)$ contained in the set {\tt AdNDindexThree($\chi$)}, compute the set of triples 
$(n,T_1,H)$, where $H$ is a group of order $n/3$ admitting three generating vectors $V_1$,$V_1'$ and $V_1''$ 
of type $T_1$ such that the associated stabilizer sets $\Sigma_1$, $\Sigma_1'$ and $\Sigma_1''$ fulfill the condition 
\[
\Sigma_1 ~ \cap ~ \Sigma_1' ~ \cap ~ \Sigma_1''  = \lbrace 1_{H} \rbrace.
\]
Here we use the fact that a threefold isogenous to a product of mixed type with numerical datum $(n,T_1)$ is 
covered by a threefold of unmixed type, where $|G^0|=n/3$. 

\noindent
In our implementation, this computation is performed by the function
{\tt NDHIndexThree}. The set of triples $(n,T_1,H)$ such that the groups of 
order $n$ are contained in the Database of Small Groups is returned.
The remaining triples are stored in the file {\tt ExcepIndexThree$\chi$.txt}. 

\bigskip
\noindent
{\bf Step 2:} 
For each integer $n$  belonging to a triple from Step 1 consider the groups of order $n$.
For each group $G$ of order $n$ construct the list of normal subgroups $G^0$ of index three 
such that the short exact sequence 
\[
1 \to G^0 \to G \to \mathfrak A_3 \to 1 
\]
does not split.
For each $G^0$ in this list consider the triples $(n,T_1,H)$ from Step 1 such that $H \simeq G^0$. 
For each of these $4$-tuples choose an element $\tau \in G \setminus G^0$ and 
compute all generating vectors 
$V_1$ for $G^0$ of type $T_1$. 
Check the freeness condition $i)$ of Proposition \ref{freenesscond} $c)$.
If it holds, then the second condition of the proposition is also fulfilled, since the sequence 
\[
1 \to G^0 \to G \to \mathfrak A_3 \to 1 
\]
is non-split which is an equivalent condition according to Proposition \ref{shortexactnonsplit}.
Therefore, there exists a threefold $X$ isogenous to a product with algebraic datum 
$(G,G^0,\tau,V_1)$ and $\chi(\mathcal O_X)=\chi$ (see Proposition \ref{converse}). 
Compute the Hodge diamond of $X$  
and save the occurrence 
\[
[G,T_1,h^{3,0}, h^{2,0}, h^{1,0}, h^{1,1}, h^{1,2}]
\]
in the file {\tt IndexThree$\chi$.txt}. Step 2 is performed calling {\tt ClassifyIndexThree($\chi$)}. 

\bigskip
\noindent
\underline{{\bf Index six case:}} 

\bigskip
\noindent
{\bf Step 1:} 
Starting from the pairs $(n,T_1)$ contained in the set {\tt AdNDindexSix($\chi$)}, compute the set of triples 
$(n,T_1,H)$, where $H$ is a group of order $n/3$ admitting a generating vector $V_1$ of type $T_1$. 

\noindent
In our implementation, this computation is performed by the function
{\tt NDHIndexSix}. The set of triples $(n,T_1,H)$ such that the groups of 
order $n$ are contained in the Database of Small Groups is returned.
The remaining triples are stored in the file {\tt ExcepIndexSix$\chi$.txt}.

\noindent
{\bf Step 2:} 
For each integer $n$  belonging to a triple from Step 1 consider the list of groups of order $n$.
For each group $G$ of order $n$, consider the triples of the form 
$(n,T_1,H)$ such that $G$ admits a subgroup of index three isomorphic to $H$. 
Compute the set of normal subgroups $G^0$ of $G$ of index six such that the short exact sequence 
\[
1 \to G^0 \to G \to \mathfrak S_3 \to 1 
\]
does not split. 
Choose elements $\tau,h \in G \setminus G^0$ such that $\tau^2 \notin G^0$ and $h^2 \in G^0$. 
If the group $G_1:=G^0 \cup h \cdot G^0$ is isomorphic to $H$, then compute all generating vectors $V_1$ of type 
$T_1$ for this group. For each of these vectors compute the associated stabilizer set $\Sigma_1$ and 
check the freeness conditions $i)$, $ii)$ and $iii)$ of Proposition \ref{freenesscond} $d)$.
If they are fulfilled, then there exists a threefold $X$ isogenous to a product 
with algebraic datum $(G,G^0,\tau,h,V_1)$ and $\chi(\mathcal O_X)=\chi$ (see Proposition \ref{converse}). 
Compute the Hodge diamond of $X$  
and save the occurrence 
\[
[G,T_1,h^{3,0}, h^{2,0}, h^{1,0}, h^{1,1}, h^{1,2}]
\]
in the file {\tt IndexSix$\chi$.txt}. Step 2 is performed calling {\tt ClassifyIndexSix($\chi$)}.

\bigskip

\begin{comprem}\label{except}
\makebox{$$}

\begin{itemize}
\item
In Part 2 of the algorithm we search for generating vectors. We point out that different generating vectors may 
determine threefolds with the same invariants. For example,
this happens if (but not only if) they differ by some \textit{Hurwitz moves}.
These moves are described in \cite{CLP15}, \cite{Zimmermann} and \cite{pen10} and we refer to these sources for
further details.
\item
We point out that for a generating vector of type $[g';-]$ the stabilizer set 
is trivial and the corresponding character $\chi_{\varphi}$ is the sum of the trivial character 
and $(g'-1)$ copies of the regular character according to the 
formula of Chevalley-Weil  (see \cite{ChevWeil}). 
Consequently, in this case it is sufficient for us to know the existence of a generating vector, but
there is no need to compute all of them. 
\end{itemize}
\end{comprem}

\bigskip

\noindent
{\bf Main Computation.}

\bigskip
\noindent
We execute the implementation for the input value $\chi=-1$. 
Note that the 
combinatorial constraints in Part 1 of the program are very strong, so
relatively few admissible numerical data are returned.
The total number of admissible group orders turns out to be relatively small and the 
maximum possible group order drops significantly compared to the theoretical bound from 
Proposition \ref{boundgroupord}. 
The table below summarizes the occurrences: 

\bigskip

{\small
\begin{center}
\setlength{\tabcolsep}{1pt}
\begin{tabular}{|c |c | c | c | c|}
\hline
$$ &  ~ index two ~ & ~ index three ~ & ~ index six ~  \\
\hline 
No. AdNumData &  $253$ & $8$ & $5$  \\
\hline 
No. G-Orders & $39$ & $2$ & $1$  \\
\hline 
$n_{max}$ &  $576$ & $216$ & $216$  \\
\hline
$n_{theo}$ &  $1539$ & $4000$ & $4000$  \\
\hline

\end{tabular}
\end{center}
}

\bigskip
\noindent 
In the first row we report the total number of admissible numerical data,
in the second row the total number of group orders,
in the third row the maximum possible group order after performing Part 1 of the algorithm and in the last row 
the theoretical bound for the group order according to Proposition 
\ref{boundgroupord}. There are no exceptional numerical data to be considered, i.e. the files 
{\tt ExcepIndexTwo$\chi$.txt},  {\tt ExcepIndexThree$\chi$.txt} and {\tt ExcepIndexSix$\chi$.txt} remain empty. 
The table below reports the computation time to run the complete program (Part 1 and Part 2) on a 
$8 \times 2.5$GHz Intel Xenon L5420 workstation with 16GB RAM in the respective cases:

\bigskip

{\small
\begin{center}\label{cputime} 
\setlength{\tabcolsep}{1pt}
\begin{tabular}{|c |c | c | c | c|}
\hline
$$ & ~ index two ~ & ~ index three ~ & ~ index six ~  \\
\hline
time &  10h  28min &  24 sec & 30 sec \\
\hline
\end{tabular}
\end{center}
}

\bigskip
\noindent 
This computation yields our main result:
the classification of 
threefolds isogenous to a product of mixed type 
with $\chi(\mathcal O_X)=-1$ and absolutely faithful $G$-action 
(see Theorem \ref{indexzwei} and Theorem \ref{indexdreiundsechs}).

\begin{comprem}
Running Part 1 of the program for values of $\chi$ different from $-1$  
exceptional numerical data might occur. 
We tried and executed Part 1 in the index two, index three and index six case for all values of $\chi$ in the range 
\[
-40 \leq \chi \leq -1
\]
and found no exceptional numerical data.
Albeit it is not of great importance in our context, we shall mention that there are methods to deal with the exceptional numerical data, if they should occur for $\chi \leq -41$. 
We refer the reader to the paper \cite{bauer}, where the authors classify surfaces isogenous to a product 
with $p_g=q=0$ and the analogous problem appears. Their strategy can be easily adapted to the threefold case. 
Nevertheless, running Part $2$ of the program for $\chi$ different from $-1$ is very time and memory consuming, in particular in the index two case: when we decrease $\chi$, then the maximal possible value for $g_i'$ 
increases, according to Proposition \ref{bounds1} vi). Similarly, the maximal length $r_i$ of the types 
\[
T_i=[g_i';m_{i,1}, \ldots ,m_{i,r_i}]
\]
that we obtain increases as well. This leads to a large number of generating vectors to be analysed, 
which slows down the computation and requires a lot of memory. 
\end{comprem}

\noindent
To conclude this chapter we give two further examples of threefolds $X$ isogenous to a product
with $\chi(\mathcal O_X)=-1$. The first one is of mixed type, obtained by an index six action,
the second one is of unmixed type without parameters, i.e. a rigid example. 
Note that we have no index six example and no rigid examples in the absolutely faithful case with 
$\chi(\mathcal O_X)=-1$ neither of mixed nor unmixed type (see \cite[Theorem 0.1]{FG16}). 
Therefore, to produce these examples, we have to allow non-trivial kernels. 

\bigskip

\begin{example}\label{Index6Examples}
\makebox{$$}
\end{example}
\noindent
We begin with the index six example. Consider the group
$G:=SmallGroup(216,90)$, it admits a unique normal subgroup $G^0$ such that $G/G^0 \simeq \mathfrak S_3$.
Moreover, the extension 
\[
1 \to G^0 \to G \to \mathfrak S_3 \to 1 
\]
is non-split. 
For the elements $h:=G.1 * G.2 * G.4^2$ and $\tau:=G.3 * G.4^2$ in $G\setminus G^0$ it holds 
\[
\tau^2 \notin G^0 \quad \quad \makebox{and} \quad \quad h^2 \in G^0,
\]
i.e. $h$ and $\tau$ define an isomorphism $G/G^0 \to \mathfrak S_3$. 
The cyclic group $K_1$ generated by $G1.3 * G1.4$ is the unique normal subgroup  
 in $G_1:=\langle h,G^0 \rangle$ 
of order six such that 
\[
K_1 \cap \tau K_1 \tau^{-1} = \lbrace 1_G \rbrace.
\] 
The quotient 
$G_1/K_1$ is isomorphic to the dihedral group
$\mathcal D_6$ via the map $G_1/K_1 \to  \mathcal D_6$ defined by 
\[
\overline{G1.1} \mapsto s \quad \quad \makebox{and} \quad \quad \overline{G1.2 * G1.5} \mapsto t.
\]
There is a faithful group action $\mathcal D_6 \to \Aut(C)$, where $C$ is a compact Riemann 
surface of genus $g(C)=7$. A corresponding generating vector is given by $V_1:=(st,st,t^5,t^5)$. 
The stabilizer set $\Sigma_1$ of the action 
\[
\psi_1 \colon G_1 \to G_1/K_1 \simeq \mathcal D_6 \to \Aut(C)
\]
fulfills the freeness conditions:

\begin{itemize}
\item[i)]
$\Sigma_1 ~ \cap ~ \tau \Sigma_1 \tau^{-1} ~ \cap ~ \tau^2 \Sigma_1 \tau^{-2} = \lbrace 1_G \rbrace$.
\item[ii)]
$(\tau g)^3 \notin \Sigma_1$ for all  $g \in G^0$  and  
\item[iii)]
$\tau f^2 \tau^{-1} \notin \Sigma_1$ for all $f \in G_1 \setminus G^0 \cap \Sigma_1$.
\end{itemize}
\noindent
According to Proposition \ref{converse} c), the tuple $(G,G^0,K_1,\tau,h,V_1)$ is an algebraic datum of a threefold 
$X=C^3/G$ isogenous to a product. Since $g(C)=7$, it holds 
\[
\chi(\mathcal O_X)=-\frac{(g(C)-1)^3}{216}=-1.
\]
For completeness, we also determine the Hodge numbers (cf. Section \ref{hodgenumbers}): 
\[
h^{3,0}(X)= 2, \quad h^{2,0}(X)= 1, \quad  h^{1,0}(X)= 1, \quad h^{1,1}(X)= 5 
\quad \makebox{and} \quad  h^{1,2}(X)= 8. 
\]

\begin{example}\label{RigidExample}
\makebox{$$}
\end{example}
\noindent
Let $S=\big(C_1 \times C_2\big)/G$ be a rigid surface isogenous to a product of unmixed type, 
then $C_i/G \simeq \mathbb P^1$ and the $G$-covers $C_i \to \mathbb P^1$ are branched over 
$0,1$ and $\infty$. These surfaces are called Beauville surfaces, since 
Beauville provided the first example of such a surface (cf. \cite{Be83}). 
In his example $G=\mathbb Z_5^2$ and $g(C_i)=6$ yielding $\chi(\mathcal O_S)=1$.
Appropriate generating vectors $V_i$ for $\mathbb Z_5^2$ are given by 
\[
V_1=\big[ (0,3), (3,3), (2,4) \big]   \quad \quad \makebox{and} \quad \quad V_2=\big[ (2,0), (2,1), (1,4) \big].
\]
We can easily modify this example to obtain a rigid threefold isogenous to a product with $\chi(\mathcal O_X)=-1$.
Consider the generating vector $V_3=(1,1,3)$ of $\mathbb Z_5$. It corresponds to an action 
\[
\psi_3 \colon \mathbb Z_5 \to \Aut(C_3),
\]
where $C_3$ is a curve of genus two, $C_3/\mathbb Z_5 \simeq \mathbb P^1$ and the $\mathbb Z_5$-cover
$C_3 \to \mathbb P^1$ is branched over $0,1$ and $\infty$. 
We obtain a diagonal, free action of $\mathbb Z_5^2$ on $C_1 \times C_2 \times C_3$, where 
$\mathbb Z_5^2$ acts on $C_3$ via $\psi_3$ composed with the projection to the first factor.  
The quotient 
\[
X=\big(C_1 \times C_2 \times C_3 \big)/\mathbb Z_5^2
\]
is a rigid threefold isogenous to a product with $\chi(\mathcal O_X)=-1$ and the Hodge numbers of $X$ are the following: 
\[
h^{3,0}(X)= 3, \quad h^{2,0}(X)= 1, \quad  h^{1,0}(X)= 0, \quad h^{1,1}(X)= 5 \quad \makebox{and} 
\quad  h^{1,2}(X)= 9. 
\]

\bigskip
\bigskip

\bigskip
\bigskip

\noindent {\bf Author's Address.} \\
\emph{Christian Glei\ss ner:} Dipartimento di Matematica, 
Universit\`{a} degli Studi di Trento; \\
Via Sommarive 14; I-38123 Povo (Trento), Italy

\end{document}